\newtheorem{theorem}{Theorem}[section]
\newtheorem{lemma}[theorem]{Lemma}
\newtheorem{proposition}[theorem]{Proposition}
\newtheorem{corollary}[theorem]{Corollary}
\newtheorem{definition}[theorem]{Definition}
\newtheorem{remark}{Remark}
\begin{document}

\title{Modeling and dynamics \\ near irregular elongated asteroids}
	\author{ E.~Mart\'{i}nez\textsuperscript{1}}
	\address{\textsuperscript{1}Departamento de Matem\'atica, Facultad de Ciencias\\Universidad del B\'io-B\'io, Casilla 5-C, Concepci\'on, Chile}
	\author{J.~Vidarte\textsuperscript{2}}
	\address{
		\textsuperscript{2}Departamento de Matem\'atica y F\'isica Aplicadas\\
Universidad Cat\'olica de la Sant\'isima Concepci\'on, Casilla 297, Concepci\'on, Chile}
	\author{J.L.~Zapata\textsuperscript{3}}
	\address{
		\textsuperscript{3}Programa de Formaci\'on Pedag\'ogica para Licenciados y/o Profesionales\\
 Facultad de Educaci\'on, Universidad San Sebasti\'an, Lientur 1457, Concepci\'on, Chile.}
	\date{}
	\subjclass{70F15, 37N05, 70F16}
	\keywords{Straight-segment; Linear density; Dynamics; Relative equilibria; Stability}

	\begin{abstract}
		
We investigate the qualitative characteristics of a test particle attracted to an irregular elongated body, modeled as a non-homogeneous straight segment with a variable linear density. By deriving the potential function in closed form, we formulate the Hamiltonian equations of motion for this system. Our analysis reveals a family of periodic circular orbits parameterized by angular momentum. Additionally, we utilize the axial symmetry resulting from rotations around the segment's axis to consider the corresponding reduced system. This approach identifies several reduced-periodic orbits by analyzing appropriate Poincar\'e sections. These periodic orbits are then reconstructed into quasi-periodic orbits within the full dynamical system.
	\end{abstract}
	
	\maketitle
\section{Introduction}

Understanding periodic orbits in non-uniform gravitational fields is essential for grasping the dynamic behaviors around asteroids, as well as for the engineering considerations involved in deep space exploration. The complexity arises from the diverse mass distributions of asteroids, which necessitates various models to describe their irregular gravitational fields. Common approaches include polyhedral models \cite{WernerScheeres1996}, mascon models \cite{GEISSLER1996140}, and simplified representations, such as straight segments for elongated bodies \cite{Duboshin1959, Prieto1994, Riaguas1999, Najid2011}.

To our knowledge, the potential of a non-homogeneous straight segment with variable linear density has not been thoroughly explored. While the potential for a homogeneous segment has been detailed in classical works such as MacMillan \cite{MacMillan1958} and Kellogg \cite{Kellogg1967}, with closed-form solutions found in \cite{Duboshin1959, Riaguas1999}. The present paper fills the gap by deriving the closed-form potential for variable density. It also provides a Hamiltonian formulation and examines the existence of periodic and quasi-periodic orbits, offering an efficient model for elongated bodies with asymmetric mass distributions, as an alternative to polyhedral or mascon models.

The quadratic density model analyzed in \cite{Najid2011, Najid2012} assumes a symmetric mass distribution about the center of mass. Consequently, this model is limited to representing bodies with symmetric mass distributions. In contrast, asymmetric mass distributions cannot be captured by such a model. This limitation leaves the dipole model \cite{2018AJ} and our proposed linear density model as the only viable options for accurately modeling elongated bodies with asymmetric mass distributions, making them more suitable for a wider range of applications.

While various models have been proposed to represent the gravitational field of elongated bodies, comprehensive comparisons between them remain scarce. Notably, only one study \cite{2018AJ} has directly compared the gravitational potentials of a straight segment with constant density and the dipole model, highlighting the strengths and limitations of each approach. However, further research is required to compare additional models, such as those involving quadratic or linear densities, in terms of accuracy and computational efficiency. Such comparisons are crucial for advancing our understanding of particle dynamics near irregularly shaped celestial bodies, particularly in the context of asteroid exploration missions. We believe that this task requires a dedicated research, which will be addressed in a future work.

This paper is organized as follows. In Section~\ref{Potential}, we introduce the mathematical model of a non-homogeneous straight segment with variable linear density, deriving the closed-form expression for the gravitational potential. Section \ref{Formulation-Newtonian-Hamiltonian} provides the Hamiltonian formulation of the system. Section~\ref{sec:PeriodicOrbits} and Section~\ref{sec:QuasiPeriodicOrbits} are dedicated to the study of periodic orbits, including circular orbits, and the reconstruction of quasi-periodic orbits from the reduced system.

\section{Potential of a straight segment with linear density}
\label{Potential}
We fix an inertial reference frame $\{O;x,y,z\}$ with the $2L$-long segment $\mathcal S$ lying along the $x$-axis and centered at the origin as in the left image of Figure~\ref{fig:SegmentoFijoPosicionCentrado}. In this frame, we assume that the density is given by the linear function $\sigma(x)=\alpha  x + \beta,\, x\in[-L,L]$. Additionally, our analysis will hinge on the following distances
$$\Delta= |\vec{ \mathcal{P} \mathcal{Q}}|,\quad  r_{1}=  |\vec{\mathcal{P} E_1}|,\quad  r_{2}= |\vec{\mathcal{P}E_2}|,$$
where $ \mathcal{P}=(x,y,z)$ denotes a massless particle, $ \mathcal{Q}\in\mathcal S$ is arbitrary, and $E_1= (L,0,0)$ and $E_2=(-L,0,0)$ are the end points of the segment. 
Keeping in mind that the total mass $M$ of the segment and the density are always positive, we obtain the following restrictions on the parameters. 
\begin{proposition}
The following statements hold:
\begin{enumerate}[label=\roman*)]
  \item The total mass $M$ of the segment is given by $M=2L\beta$.
  \item The coordinates of the center of mass are $CM=\left(\bar{c},0,0\right)$, where $\bar{c}=\frac{2 \alpha L^3}{3 M}$. 
  \item The slope of the density function satisfaces:  $- \frac{M}{2L^2}< \alpha < \frac{M}{2L^2}.$
\end{enumerate}
\end{proposition}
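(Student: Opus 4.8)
The plan is to compute the two relevant moments of the density directly and then translate the positivity requirement into a constraint on $\alpha$. All three statements reduce to elementary integrals over $[-L,L]$, so the work is bookkeeping rather than anything subtle.

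First I would establish (i) by integrating the density along the segment: $M = \int_{-L}^{L} \sigma(x)\,dx = \int_{-L}^{L} (\alpha x + \beta)\,dx$. The odd term $\alpha x$ integrates to zero over the symmetric interval $[-L,L]$, leaving $M = 2L\beta$; in particular, since $M>0$ and $L>0$, this already forces $\beta>0$, a fact I will reuse in (iii). Next, for (ii), the $y$- and $z$-coordinates of the center of mass vanish because $\mathcal S$ lies entirely on the $x$-axis, so the only computation is $\bar c = \frac{1}{M}\int_{-L}^{L} x\,\sigma(x)\,dx = \frac{1}{M}\int_{-L}^{L}(\alpha x^2 + \beta x)\,dx$. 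This time the even term $\alpha x^2$ survives and the odd term $\beta x$ drops out, giving $\bar c = \frac{1}{M}\cdot\frac{2\alpha L^3}{3}$, as claimed.

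For (iii), the key observation is that $\sigma$ is affine in $x$, hence monotone on $[-L,L]$, so it is positive on the whole interval if and only if it is positive at the two endpoints $E_1=(L,0,0)$ and $E_2=(-L,0,0)$, i.e. $\sigma(L)=\alpha L+\beta>0$ and $\sigma(-L)=-\alpha L+\beta>0$. Together these two inequalities are equivalent to $|\alpha|\,L<\beta$. Substituting $\beta = M/(2L)$ from part (i) and dividing by $L>0$ yields $|\alpha| < M/(2L^2)$, which is exactly the stated two-sided bound $-\tfrac{M}{2L^2}<\alpha<\tfrac{M}{2L^2}$.

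There is no genuine obstacle in this proposition; the only points requiring care are the reduction in (iii) to checking the two endpoints (which rests on monotonicity of an affine function) and tracking the sign conventions so that the bound comes out symmetric. I would also remark in passing that relaxing the physical hypothesis to $\sigma\ge 0$ would merely turn the strict inequalities into non-strict ones, so the open interval reflects the assumption that the density is strictly positive.
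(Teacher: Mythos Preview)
Your proof is correct and follows essentially the same approach as the paper: both compute the total mass and center of mass directly from the defining integrals, and both obtain (iii) by imposing $\sigma(x)=\alpha x+\beta>0$ on $[-L,L]$ with $\beta=M/(2L)$. Your version is slightly more explicit (noting the parity of the integrands and reducing (iii) to the two endpoint values via monotonicity), but the underlying argument is identical.
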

\begin{proof}
The total mass $M$ of the segment and the $x$-coordinate $\bar{c}$ of the center of mass are given respectively by
$$ M =  \int_{-L}^L \sigma(x)\, dx,\quad \bar{c}= \frac{1}{M}\int_{-L}^L x\,\sigma(x) dx,$$
which, after imposing $\sigma(x)=\alpha  x + \beta$, lead to { (i)} and { (ii)}. Item { (iii)} follows from solving $\sigma(x)=\alpha  x + \beta>0,$ with $\beta=M/2L$ and $ x\in [-L,L].$
\end{proof}

\begin{figure}[h]
\vspace{-1.0cm}\includegraphics[scale=0.46]{./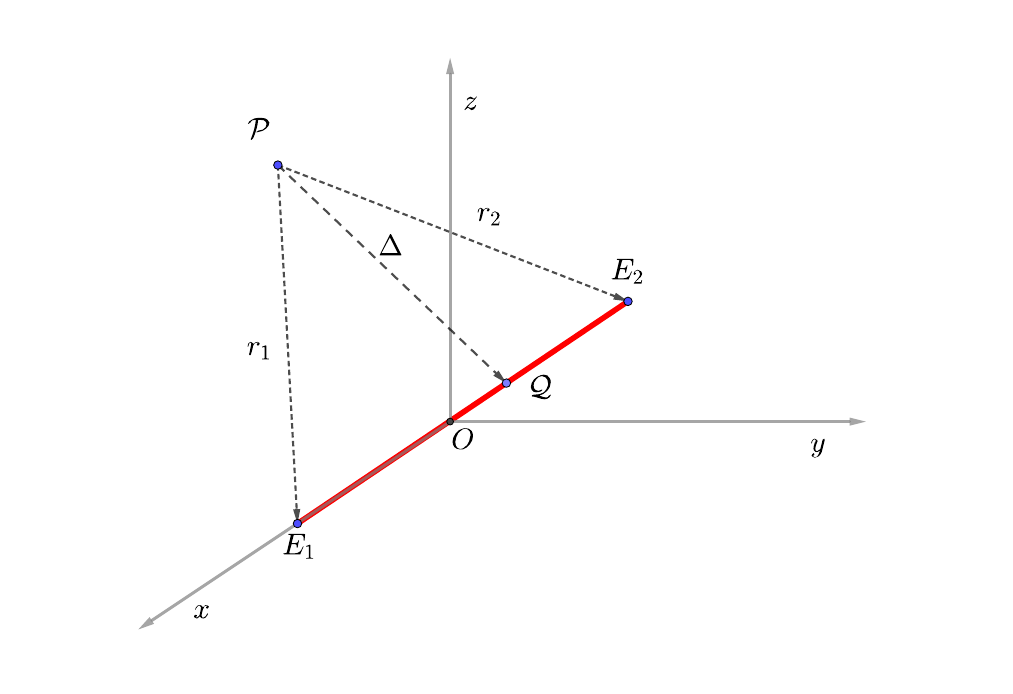}
\hspace{-1.5cm}\includegraphics[scale=0.5]{./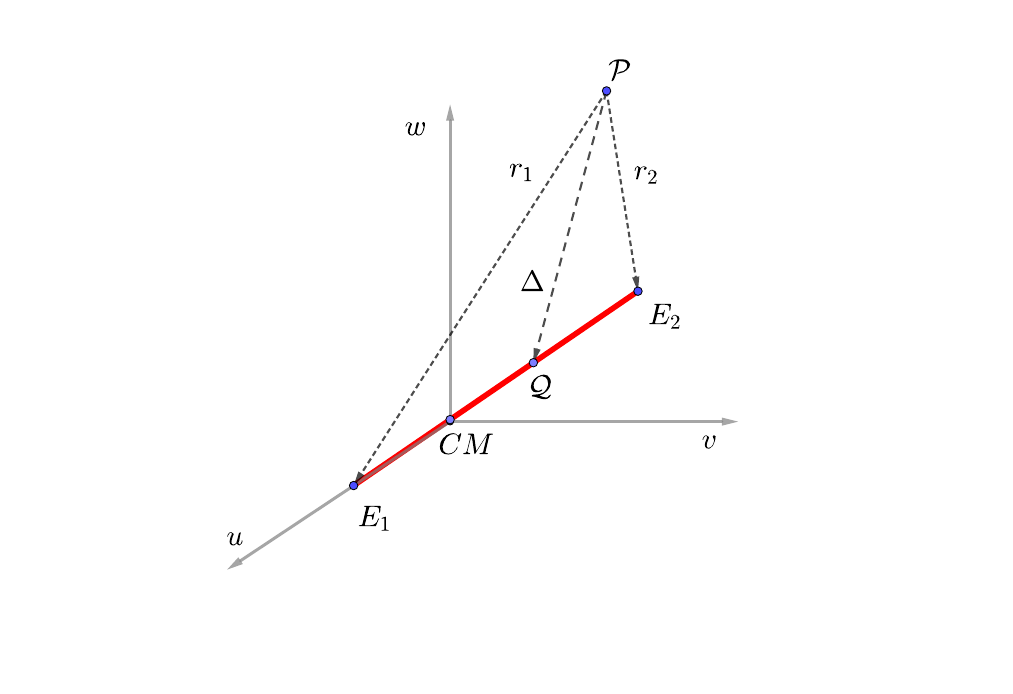}
\vspace{-0.5cm}\caption{\footnotesize The fixed segment in different reference frames. Left: The reference frame $\{O;x,y,z\}$ with the segment centered on the origin and lying along the $x$-axis. Right: The new frame $\{O;u,v,w\}$ results from a translation in the $x$-axis locating the center of mass at the new origin.}
\label{fig:SegmentoFijoPosicionCentrado}
\end{figure}

\begin{remark}
\label{remark:Slope}
In what follows, we will restrict our study to positive values of the parameter $\alpha$, since the negative values are obtained just by switching the orientation of the $u$-axis.
\end{remark}

The computation of the potential function is done in a reference frame $\{O;u,v,w\}$ where the segment lies along the $u$-axis with its center of mass located at the origin, see the right image of Figure~\ref{fig:SegmentoFijoPosicionCentrado}. In this frame, the distances from $\mathcal{P}$ to the end points are
\begin{equation}
 r_{1}= \sqrt{(-L +u+\bar{c})^2+v^2 + w^2},\quad  r_{2}= \sqrt{(L + u+ \bar{c})^2+v^2 + w^2},
\end{equation}
and the density function is $\sigma(u) =\alpha u +\beta+\alpha \bar{c}$, with $ u\in[-L-\bar{c},L-\bar{c}].$ Hence, the potential function is given by the following line integral
  \begin{equation}
 \begin{array}{rl}
V(\mathcal{P}) & =  -G \displaystyle\int_\mathcal{S} \dfrac{d{\textrm M}}{\Delta}= -G \displaystyle\int_{-L-\bar c}^{L-\bar c} \dfrac{\sigma(u)\ du}{|\vec{ \mathcal{P} \mathcal{Q}}|}. \end{array}
\end{equation}
To simplify the denominator, we express $\vec{ \mathcal{P} \mathcal{Q}}= r_1 + s\,r_{12}$, where $r_{12}=r_2 -r_1=2L$ with $s\in(0,1)$, which leads us to $|\vec{ \mathcal{P} \mathcal{Q}}|=\sqrt{r_1^2 + s^2\,r_{12}^2+2s\, r_1 r_{12}}$.  The above integral is computed, in terms of the new variable $s$, by considering the change of variable $u=2Ls-L- \bar{c}$, $du=2Lds$. Thus, considering the relation $2 r_1 r_{12}=r_2^2 -r_1^2 -4L$ obtained from the Cosine Theorem, we are left with the following integral
\begin{equation}
 \begin{array}{rl}
V(\mathcal P) 
 & = -G \displaystyle\int_0^1 \frac{c_1s +c_2 }{\sqrt{s^2+c_3 s+c_4}} \ ds,
\end{array}
\end{equation}
where $c_1= 2L\alpha$, $c_2= \beta-\alpha L $, $c_3= (-4 L^2-{r}_{1}^2+{r}_{2}^2)/4 L^2$ and $c_4= {r}_{1}^2/4 L^2$. The final expression for the potential function is 
\begin{equation}
\label{eq:PotencialOriginal3}
V(u, v, w)=\frac{ c_1 G}{2 L} ( r_{1}- r_{2})-\frac{ G}{8 L^2}  \left[ c_{1} \left(4 L^2+ r_{1}^2- r_{2}^2\right)+ 8c_{2}   L^2\right] \ln \left(\frac{2 L+{r_1}+{r_2}}{-2 L+{r_1}+{r_2}}\right).
\end{equation}
Notice, that $c_1=0$ leads us to the potential associated with constant density, see \cite{Kellogg1967,Duboshin1959, Riaguas1999}. Moreover, we highlight that $V$ only depends on the distances $  r_1$ and $ \mathbf r_2$. Therefore, the potential function remains the same under rigid transformations of the reference frame.

\section{Equations of motion. Hamiltonian formalism}
\label{Formulation-Newtonian-Hamiltonian}
For our subsequent analysis, we introduce the conjugate momenta $p=(p_u,p_v,p_w)$ associated with $q=(u,v,w)$. Then, the equations of motion can be written as a Hamiltonian system
\begin{equation}
\dot{q}=H_p,\quad \dot{p}=-H_q,\nonumber
 \end{equation}
where $H(q,p;\alpha, M, L)$ is a multiparametric family of Hamiltonian functions given by
\begin{gather}
\begin{aligned}
\label{hamiltoniano-rotando1}
H(q,p)=&\frac{1}{2} \left(p_u^2+p_v^2+p_w^2\right)
+V(q),
 \end{aligned}
 \end{gather}
 with $V(q)$ the potential function given in \eqref{eq:PotencialOriginal3}. The domain of the Hamiltonian $H$ is $ \mathbb{R}^3 \setminus  \mathcal{S}\times\mathbb{R}^3$, being $\mathcal{S}= \{q\in \mathbb{R}^3 : u \in [-L-\bar{c}, L-\bar{c}], \ v=0, \ w=0  \}$.

To reduce the number of parameters, we customize the units through the following symplectic change of scale $(q,p)\to(Q,P)$, and time reparametrization
\begin{equation}
\label{scaling}
q\to L Q,\quad p\to \sqrt{\frac{G M}{2 L}} P, \quad \alpha \to A (3 M)/2 L^2,\quad t=\frac{2 L}{G M}\tau,
\end{equation}
where $Q=(\xi,\eta,\zeta)$, $P=(p_\xi,p_\eta,p_\zeta)$. This transformation provides the new family of Hamiltonians depending on one parameter, $\mathcal H(Q,P)=\mathcal H(Q,P;A)$, given by
\begin{gather}
\begin{aligned}
\label{eq:HamiltonianaEscalada}
\mathcal H(Q,P)=& \frac{1}{2} \left(p_\xi^2+p_\eta^2+p_\zeta^2\right)
+U(Q;A),
 \end{aligned}
 \end{gather}
with  $0\leqslant A\leqslant{1}/{3}$. The domain of $\mathcal H$ is $\mathbb{R}^3 \setminus  \mathcal{S}\times\mathbb{R}^3 $, where  $\mathcal{S}= \{Q \in \mathbb{R}^3 : \xi \in [-1-A, 1-A], \ \eta=0, \ \zeta=0  \}$. Moreover, the potential is obtained through the following compact expression
\begin{equation}
\label{eq:PotencialEscalado}
U(Q;A)=3 A \,d-\frac{1}{4}(3 A \,d\, s+4) \ln \left(\frac{s+2}{s-2}\right),
\end{equation}
where the auxiliary variables $s$ and $d$ are given by

\begin{equation}
\label{eq:VariablesAuxiliares}
s=R_1+R_2,\quad d=R_1-R_2, 
\end{equation}
with $R_1=\sqrt{( -1+A+\xi)^2+\eta ^2+\zeta ^2}$, and $R_2=\sqrt{(1+A+\xi )^2+\eta ^2+\zeta ^2}.$ 

The use of the auxiliary variables $s$ and $d$ will facilitate the analytic study by allowing much more compact expressions. Furthermore, these variables satisfy the following properties.

\begin{lemma}
\label{lemma:sd}
The variables $s$ and $d$ satisfy the following properties:
\begin{enumerate}[label=\roman*)]
  \item $2< s$ and $-2\leqslant d\leqslant2$.
  \item $s=2$, the  infinitesimal particle $\mathcal{P}$ collide to the segment $\mathcal S$.
  \item $ d=-2$, if and only if, $\eta=\zeta=0$ and $\mathcal{P}$ is on the right side of the segment. 
  \item $ d=2$, if and only if, $\eta=\zeta=0$ and $\mathcal{P}$ is on the left side of the segment.
\end{enumerate}
\end{lemma}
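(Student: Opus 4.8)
The plan is to strip away the analytic dressing and recognize that $R_1$ and $R_2$ are nothing but Euclidean distances. In the rescaled frame the endpoints of $\mathcal S$ are $E_1=(1-A,0,0)$ and $E_2=(-1-A,0,0)$, so $R_1=|\mathcal P E_1|$ and $R_2=|\mathcal P E_2|$ are the distances from the particle $\mathcal P=(\xi,\eta,\zeta)$ to the two endpoints, and $|E_1E_2|=2$. Thus $R_1$, $R_2$ and $2$ are the side lengths of the (possibly degenerate) triangle $\mathcal P E_1 E_2$, and the whole lemma is simply the dictionary between the triangle inequality — together with its equality cases — and the variables $s=R_1+R_2$, $d=R_1-R_2$.

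For (i) I would apply the triangle inequality to $\mathcal P E_1 E_2$: $s=R_1+R_2\geqslant|E_1E_2|=2$, and the reverse triangle inequality $|d|=|R_1-R_2|\leqslant|E_1E_2|=2$, giving $-2\leqslant d\leqslant2$. The bound $s\geqslant2$ is an equality exactly when the triangle degenerates with $\mathcal P$ between $E_1$ and $E_2$, i.e. when $\mathcal P$ lies on the closed segment $[E_2,E_1]$, which is precisely $\mathcal S$; since the domain of $\mathcal H$ excludes $\mathcal S$, on the domain we obtain the strict bound $2<s$. This same remark is exactly item (ii): $s=2$ if and only if $\mathcal P\in\mathcal S$, i.e. the particle sits on (``collides with'') the segment.

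For (iii) and (iv) I would use the equality case of the reverse triangle inequality: $R_2-R_1=|E_1E_2|$ holds if and only if $E_1$ lies on the segment $[\mathcal P,E_2]$, and $R_1-R_2=|E_1E_2|$ holds if and only if $E_2$ lies on $[\mathcal P,E_1]$. In the first case $\mathcal P$, $E_1$, $E_2$ are collinear, which forces $\eta=\zeta=0$, and $\mathcal P$ must lie beyond $E_1$ on the side away from $E_2$, i.e. $\xi\geqslant1-A$ — the right side of the segment — whence $d=-2$; conversely, substituting $\eta=\zeta=0$ and $\xi\geqslant1-A$ gives $R_1=\xi-1+A$ and $R_2=\xi+1+A$, so $d=-2$. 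The case $d=2$ is the mirror image, with $\eta=\zeta=0$ and $\xi\leqslant-1-A$, i.e. $\mathcal P$ on the left side.

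The argument is short and I expect no genuine obstacle; the only care needed is bookkeeping — matching the strict inequality $2<s$ in (i) with the fact that $s=2$ is attained only on the excluded set $\mathcal S$, and phrasing the equality cases of the triangle inequality cleanly. If one prefers to avoid appealing to degenerate triangles, the same conclusions follow by a direct computation: one verifies the identities $sd=-4(A+\xi)$ and $s^2+d^2=4(A+\xi)^2+4+4(\eta^2+\zeta^2)$, and using $(s^2-d^2)/4=R_1R_2$ one shows, after squaring, that $s^2-4$ and $4-d^2$ are nonnegative, vanishing exactly on $\mathcal S$ and on the two axial rays $\{\eta=\zeta=0,\ |\xi+A|\geqslant1\}$ respectively.
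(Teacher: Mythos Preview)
Your proposal is correct and follows essentially the same approach as the paper: recognize $R_1$, $R_2$, and $2$ as the side lengths of the triangle $\mathcal P E_1 E_2$ and invoke the triangle inequality and its reverse. You are in fact more careful than the paper, which simply writes the two inequalities and then asserts ``Hence, the properties of the lemma hold'' without spelling out the equality cases for (ii)--(iv) as you do.
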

\begin{proof}
After the scaling, the longitude of the segment is equal to two. Hence, the particle $\mathcal{P}$ and end points $E_1$ and $E_2$ generate a triangle with sides equal to $R_1$, $R_2$, and 2. According to triangle inequality and the reverse triangle inequality we have
$$s=R_1+R_2>2,\quad |d|= |R_1-R_2|< 2 .$$
 Hence, the properties of the lemma hold.
 \end{proof}

After all these manipulations, we may give the equations of motion in the following compact form
 \begin{gather}
\begin{aligned}
\label{eq:EqMovSegmentoFijo}
 \dot{\xi}&= p_\xi ,\qquad &\dot{p_\xi}&= \frac{4\,(4d+12 A\, s)}{s^2-d^2}-3 A \ln \left(\frac{s+2}{s-2}\right), \\ 
\dot{\eta}&= p_\eta,\qquad &  \dot{p}_\eta&=16\,\eta\,\frac{3 A\, d+s}{\left(s^2-4\right) \left(d^2-s^2\right)}, \\ 
\dot{\zeta}&= p_\zeta,\qquad & \dot{p}_\zeta&=-16\,\zeta\,\frac{3 A\, d+s}{\left(s^2-4\right) \left(s^2-d^2\right)}.
 \end{aligned}
 \end{gather}
 
Notice that the above system \eqref{eq:EqMovSegmentoFijo} is invariant under rotations about $\xi$-axis. Moreover, all the planes that contain the segment are invariant. In particular, the $\xi$-axis, given by $\eta=\zeta=0$ and $p_\eta=p_\zeta=0$, is invariant.

Due to the axial symmetry of the problem, it is convenient to define the following symplectic transformation $(\xi,\eta, \zeta, p_\xi,p_\eta, p_\zeta) \rightarrow (r, \theta , x, P_r , P_\theta, P_x)$
\begin{gather}
\begin{aligned}
\label{hamiltoniano-fijo-cilindricas}
\xi=&\,x,\quad &p_\xi=&P_x,\\ 
\eta=&\,r \cos\theta, \quad &p_\eta=&(r P_r \sin_\theta + P_{\theta} \cos\theta)/r,\\ 
 \zeta=&\,r \sin\theta, \quad &p_\zeta=&(r P_r \cos\theta- P_{\theta}\sin\theta)/r.
 \end{aligned}
 \end{gather}
Thus, the Hamiltonian function in cylindrical coordinates is expressed as 
$$\mathcal H=\dfrac{1}{2} \left(P_r^2 + P_x^2+ \dfrac{ P_\theta^2}{r^2}\right)+ 
 3 A \,d-\frac{1}{4}(3 A \,d\, s+4) \ln \left(\frac{s+2}{s-2}\right),$$
and the equations of motion are the following
\begin{gather}
\begin{aligned}
\label{sistema-ham-segmento-fijo-densidad-lineal1}
 \dot{r}&= P_r,\qquad &\dot{P_r}&=\frac{P_\theta^2}{r^3}-\frac{16r(3 A d+s)}{(s^2-d^2)(4-s^2) }, \\ 
\dot{\theta}&= P_\theta/r^2,\qquad & \dot{P_\theta}&=0, \\ 
\dot{x}&= P_x,\qquad & \dot{P_x}&=  \frac{4\,(4d+12A\, s)}{s^2-d^2}-3 A \ln \left(\frac{s+2}{s-2}\right).
 \end{aligned}
 \end{gather}

  \begin{proposition}
  \label{propo:NoEq}
 The Hamiltonian system \eqref{sistema-ham-segmento-fijo-densidad-lineal1} has no equilibria.
 \end{proposition}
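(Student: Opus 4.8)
The plan is to show that the right-hand sides of the equations of motion have no common zero. I would work with the Cartesian form \eqref{eq:EqMovSegmentoFijo}, which is smooth on the whole domain, rather than the cylindrical form \eqref{sistema-ham-segmento-fijo-densidad-lineal1}, whose right-hand side is singular precisely on the invariant $\xi$-axis. A point is an equilibrium of \eqref{eq:EqMovSegmentoFijo} exactly when all six right-hand sides vanish; the equations $\dot\xi=p_\xi$, $\dot\eta=p_\eta$, $\dot\zeta=p_\zeta$ immediately force $p_\xi=p_\eta=p_\zeta=0$, so everything reduces to showing that the force $-\nabla U$ never vanishes on $\mathbb R^{3}\setminus\mathcal S$, i.e. that the three equations for $\dot p_\xi,\dot p_\eta,\dot p_\zeta$ have no common zero.

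First I would rule out points off the $\xi$-axis. By Lemma~\ref{lemma:sd}(i) one has $s>2$, hence $s^{2}-4>0$ and $s^{2}-d^{2}=(s-d)(s+d)>0$; together with $d\ge-2$ and $0\le A\le1/3$ this gives $3Ad+s>2-6A\ge0$. Consequently the coefficient of $\eta$ in the $\dot p_\eta$-equation and the coefficient of $\zeta$ in the $\dot p_\zeta$-equation are of constant nonzero sign, so $\dot p_\eta=\dot p_\zeta=0$ forces $\eta=\zeta=0$. Any equilibrium therefore lies on the $\xi$-axis, where by Lemma~\ref{lemma:sd}(iii)--(iv) one has either $d=-2$ (particle to the right of $\mathcal S$) or $d=2$ (particle to the left).

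It then remains to examine the scalar equation $\dot p_\xi=0$ on the axis. Substituting $d=\mp2$ and taking $s\in(2,\infty)$ as the single remaining variable, $\dot p_\xi$ becomes $3A\,\Phi(s)\mp\frac{8}{s^{2}-4}$, where $\Phi(s)=\frac{4s}{s^{2}-4}-\ln\!\left(\frac{s+2}{s-2}\right)$, so the non-vanishing of $\dot p_\xi$ follows from the two elementary inequalities
\[
\ln\!\left(\frac{s+2}{s-2}\right)<\frac{4s}{s^{2}-4}
\qquad\text{and}\qquad
\ln\!\left(\frac{s+2}{s-2}\right)>\frac{4}{s+2}\qquad(s>2),
\]
each proved by checking that the difference of the two sides has a derivative of constant sign on $(2,\infty)$ and tends to $0$ as $s\to\infty$. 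The first inequality yields $\Phi>0$, hence $\dot p_\xi=3A\Phi(s)+\frac{8}{s^{2}-4}>0$ on the left part of the axis; combining $\Phi>0$, $3A\le1$ and the second inequality gives $3A\Phi(s)\le\Phi(s)<\frac{8}{s^{2}-4}$, hence $\dot p_\xi=3A\Phi(s)-\frac{8}{s^{2}-4}<0$ on the right part. In either case $\dot p_\xi\ne0$, contradicting the existence of an equilibrium, and the proof is complete.

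The only genuinely delicate step is this last one: off the axis the transverse forces cannot vanish for plain sign reasons, but on the axis one must rule out a cancellation between the algebraic and the logarithmic parts of $\dot p_\xi$, which is exactly what the two inequalities above provide. As a remark, the whole proposition also admits a coordinate-free proof: from the derivation in Section~\ref{Potential} together with the scaling \eqref{scaling}, $U$ is the Newtonian potential of the non-negative linear density $\rho(u)=1+3A(u+A)$ carried by $\mathcal S$, so $-\nabla U(Q)=\int_{\mathcal S}\rho(u)\,\dfrac{(u,0,0)-Q}{|(u,0,0)-Q|^{3}}\,du$ for $Q\notin\mathcal S$; since $\mathcal S$ is compact and convex and $Q\notin\mathcal S$, a hyperplane strictly separating $Q$ from $\mathcal S$, with unit normal $\mathbf n$ pointing from $\mathcal S$ towards $Q$, makes the $\mathbf n$-component of the integrand nonpositive and strictly negative on the full-measure set $\{\rho>0\}$, whence $\langle-\nabla U(Q),\mathbf n\rangle<0$ and in particular $-\nabla U(Q)\ne0$.
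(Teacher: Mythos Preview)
Your argument is correct, but it does considerably more than the paper's. The proposition is stated for the \emph{cylindrical} system \eqref{sistema-ham-segmento-fijo-densidad-lineal1}, whose phase space already carries $r>0$; the paper simply observes that at an equilibrium $\dot r=\dot\theta=\dot x=0$ forces $P_r=P_\theta=P_x=0$, after which $\dot P_r=0$ reads $16r(3Ad+s)=0$, impossible because $r>0$ and $3Ad+s>0$ by the same sign reasoning you give. That one line is the entire proof.

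By passing to the Cartesian system \eqref{eq:EqMovSegmentoFijo} you prove the stronger (and physically more natural) statement that there are no equilibria anywhere in $\mathbb R^{3}\setminus\mathcal S$, including on the $\xi$-axis, which the cylindrical chart never sees. Off the axis your transverse-force step is exactly the paper's $\dot P_r$ argument in Cartesian guise; the on-axis analysis of $\dot p_\xi$ via the two logarithmic inequalities, and the alternative separating-hyperplane argument, are genuinely new ingredients that the paper does not need but which round out the picture. One small bookkeeping remark: your on-axis expression $3A\,\Phi(s)\mp 8/(s^{2}-4)$ comes from $\dot p_\xi=\dfrac{4(d+3As)}{s^{2}-d^{2}}-3A\ln\dfrac{s+2}{s-2}$, which is the form consistent with $F_2$ in \eqref{sistema-ham-reducido-segmento-densidad-lineal3}; the displayed $\dot p_\xi$ in \eqref{eq:EqMovSegmentoFijo} carries an extraneous factor of $4$ in the first fraction, so your substitution is the correct one.
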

 \begin{proof}
 An equilibrium implies $P_r=P_\theta=P_x=0$. Thus, the equation for $\dot{P_r}$ becomes
 $$\frac{16r(3 A d+s)}{(s^2-d^2)(4-s^2) }=0,$$
whose numerator is always strictly positive.
 \end{proof}
 
 \section{Periodic orbits of the non-homogeneous fixed straight segment}
 \label{sec:PeriodicOrbits}
Since the polar angle is cyclic in system \eqref{sistema-ham-segmento-fijo-densidad-lineal1}, we investigate the existence of periodic orbits on the reduced system obtained after disregarding the variables $(\theta,P_\theta)$. Note that the presence of $P_\theta$ in the following equations of motion plays the role of an arbitrary constant parameter, which from now on we denote $P_\theta=c$
 \begin{gather}
\begin{aligned}
\label{sistema-ham-segmento-fijo-densidad-lineal1-reducido}
 \dot{r}&= P_r,\qquad &\dot{P_r}&=\frac{c^2}{r^3}-\frac{16r(3 A d+s)}{(s^2-d^2)(4-s^2) }, \\ 
\dot{x}&= P_x,\qquad &\dot{P_x}&=  \frac{4\,(4d+12A\, s)}{s^2-d^2}-3 A \ln \left(\frac{s+2}{s-2}\right).
 \end{aligned}
 \end{gather}
 The inverse relation between the pairs $(r,x)$ and $(s,d)$ follows by solving the equations for $r$ and $x$, where now the radius in cylindrical coordinates are
 \begin{gather}
\begin{aligned}
\label{variables cilindricas-s-d}
d=&r_1-r_2=\sqrt{(A+x-1)^2+r^2}-\sqrt{(A+x+1)^2+r^2}, \\ 
s=&r_1+r_2=\sqrt{(A+x-1)^2+r^2}+\sqrt{(A+x+1)^2+r^2}. \\ 
 \end{aligned}
 \end{gather}
 Hence, we obtain the following relation that will be useful in the study of the equilibria for the system \eqref{sistema-ham-segmento-fijo-densidad-lineal1-reducido}  
\begin{equation}
\label{relacion r-x-s-d}
r(s,d)=\frac{1}{4} \sqrt{\left(s^2-4\right)\left(4-d^2\right) },\quad x(s,d)=-A-\frac{d s}{4}.
\end{equation}

After making some arrangements in \eqref{sistema-ham-segmento-fijo-densidad-lineal1-reducido} comes down to study the roots of the following two functions in $s$ and $d$
\begin{gather}
\begin{aligned}
\label{sistema-ham-reducido-segmento-densidad-lineal3}
F_1&=\left(d^2-4\right)^2 \left(s^2-4\right) (3 A d+s)+16 c^2 \left(d^2-s^2\right), \\ 
F_2 &=3 A s+d+\frac{3}{4} A \left(d^2-s^2\right) \ln \left(\frac{s+2}{s-2}\right). \\ 
 \end{aligned}
 \end{gather}

Since $r=r(s,d)$ and $x=x(s,d)$ from \eqref{relacion r-x-s-d}, if we found a solution $(s^*,d^*)$ in \eqref{sistema-ham-reducido-segmento-densidad-lineal3}, then we obtain a critical point $(r(s^*,d^*),x(s^*,d^*),0,0)$ of the system \eqref{sistema-ham-segmento-fijo-densidad-lineal1-reducido}.

 In \cite{Riaguas1999}, the author proved the existence of a circular orbit in an invariant perpendicular plane to the segment through the origin for the case $A=0$. This orbit is obtained for 
 $$s_0=\frac{1}{2} \left(c^2+\sqrt{c^4+16}\right), \quad d_0=0,\quad P_\theta=c, \quad P_r=P_x=0.$$
 The following result shows that a unique circular solution exists for all values of parameter $A$ small enough. 
 
 \begin{theorem}
 For $A$ small enough, there exist a unique periodic circular orbit
  $$s=s(A)=s_0+\mathcal O(A^2), \quad d=d(A)=d'(0)\,A+\mathcal O(A^2),\quad P_\theta=c,\quad P_r=P_x=0.$$
 \end{theorem}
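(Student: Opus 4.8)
The plan is to recast the existence of a circular orbit as a zero-finding problem for the pair $(F_1,F_2)$ of \eqref{sistema-ham-reducido-segmento-densidad-lineal3} and then continue the known solution of \cite{Riaguas1999} in the parameter $A$ by the implicit function theorem. Fix $c\neq 0$ (so that $s_0>2$, hence $r(s_0,0)>0$ by \eqref{relacion r-x-s-d}; note that $c=0$ forces $s_0=2$ and the ``orbit'' degenerates onto $\mathcal S$), and define $\Phi\colon(s,d,A)\mapsto\bigl(F_1(s,d,A),F_2(s,d,A)\bigr)$ near $(s_0,0,0)$. First I would check $\Phi(s_0,0,0)=(0,0)$: at $A=0$ one has $F_2=d$, so $F_2=0$ forces $d=0$, and then $F_1\big|_{A=0,\,d=0}=16\,s\,(s^2-c^2 s-4)$, whose nonzero root is exactly $s_0=\tfrac12(c^2+\sqrt{c^4+16})$. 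This is the orbit of \cite{Riaguas1999}.

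The core step is the invertibility of $D_{(s,d)}\Phi(s_0,0,0)$. A direct computation of the partials, using $\frac{d}{ds}\ln\frac{s+2}{s-2}=-\frac{4}{s^2-4}$ and the identity $s_0^2=c^2 s_0+4$, gives at $(s_0,0,0)$ that $\partial_d F_2=1$, $\partial_s F_2=0$ (every remaining term of $F_2$ and of $\partial_s F_2$ carries a factor $A$), $\partial_d F_1=0$ (its surviving term is $48A(s^2-4)$, which vanishes at $A=0$), and $\partial_s F_1=16(3s_0^2-4)-32c^2 s_0=16(c^2 s_0+8)>0$. Hence the Jacobian is diagonal with determinant $16(c^2 s_0+8)>0$, and the analytic implicit function theorem produces a unique real-analytic branch $A\mapsto(s(A),d(A))$ near $A=0$ with $s(0)=s_0$, $d(0)=0$ solving $\Phi\equiv 0$; this local uniqueness is precisely the uniqueness asserted. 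For each such $A$, the point $(r(s(A),d(A)),x(s(A),d(A)),0,0)$ is an equilibrium of the reduced system \eqref{sistema-ham-segmento-fijo-densidad-lineal1-reducido}, which lifts to a genuine periodic circular orbit of \eqref{sistema-ham-segmento-fijo-densidad-lineal1} traversed with constant angular speed $\dot\theta=c/r^2\neq 0$.

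To obtain the stated Taylor expansions I would differentiate the identities $F_i(s(A),d(A),A)\equiv 0$ at $A=0$. Since $\partial_s F_2=0$ and $\partial_d F_2=1$ there, the second identity yields $d'(0)=-\partial_A F_2(s_0,0,0)=-3s_0+\tfrac34 s_0^2\ln\frac{s_0+2}{s_0-2}$, which is (generically) nonzero, so $d(A)=d'(0)\,A+\mathcal O(A^2)$. Because $\partial_d F_1(s_0,0,0)=0$ and $\partial_A F_1(s_0,0,0)=0$ (both of those terms carry a vanishing factor $d$), the first identity collapses to $\partial_s F_1(s_0,0,0)\,s'(0)=0$, whence $s'(0)=0$ and $s(A)=s_0+\mathcal O(A^2)$, as claimed.

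The main obstacle is not conceptual but bookkeeping: organizing the differentiation of $F_1$ and $F_2$ so that the vanishing of $\partial_d F_1$, $\partial_A F_1$, $\partial_s F_2$ at the base point is transparent, and correctly invoking $s_0^2-c^2 s_0-4=0$ to reduce $\partial_s F_1$ to the manifestly positive $16(c^2 s_0+8)$. If one wants global (rather than merely local) uniqueness among circular orbits for small $A$, the implicit-function argument should be supplemented by an a priori estimate confining all roots of $(F_1,F_2)$ to a fixed neighborhood of $(s_0,0)$ as $A\to 0$.
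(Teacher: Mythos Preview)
Your proof is correct and follows essentially the same route as the paper: apply the implicit function theorem to $F=(F_1,F_2)$ at $(s_0,0,0)$, observing that the Jacobian is diagonal with nonzero determinant (your value $16(c^2 s_0+8)$ equals the paper's $8(c^4+\sqrt{c^4+16}\,c^2+16)$ via $s_0^2=c^2s_0+4$). You actually go a bit further than the paper's own proof by explicitly verifying $s'(0)=0$ through $\partial_d F_1=\partial_A F_1=0$ at the base point and by computing $d'(0)$, which the paper defers to a subsequent remark; your caveat about $c\neq 0$ and the local-versus-global uniqueness distinction are also well placed.
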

 \begin{proof} 
 For this purpose, we consider the function $F(s,d; A)=(F_1, F_2)$ and employ the implicit function theorem to prove the existence of a unique differentiable functions $s(A)$ and $d(A)$, satisfying $F(s(A),d(A); A)=0$ in a small neighborhood of $A=0$. First, note that $F(s_0,0,0)=0$ and second, computing

 $$DF(s_0,0;0)=\left(\begin{array}{cc}  \dfrac{\partial F_1}{\partial s} &  \dfrac{\partial F_1}{\partial d} \\ [1.4ex]\dfrac{\partial F_2}{\partial s} &  \dfrac{\partial F_2}{\partial d}\end{array}\right)_{A=0,d=0,s_0}=\left(\begin{array}{cc}  8 \left(c^4+\sqrt{c^4+16} c^2+16\right) & 0 \\ [1.4ex] 0 & 1\end{array}\right).$$
 Thus, since $det|DF(s_0,0;0)|= 8 \left(c^4+\sqrt{c^4+16} c^2+16\right)\neq 0$, by the implicit function theorem, we conclude the proof.
 \end{proof}
 Moreover, we may ensure the existence of the circular orbit for a wider domain of the parameter $A$. More precisely, we regard the equation $F_2$ as a second-degree polynomial in the variable $d$. Thus, assuming $A\neq0$ in $F_2$ and solving in the variable $d$, we obtain 
 
\begin{figure}[h]
\centering
\vspace{0.0cm}
\includegraphics[scale=0.5]{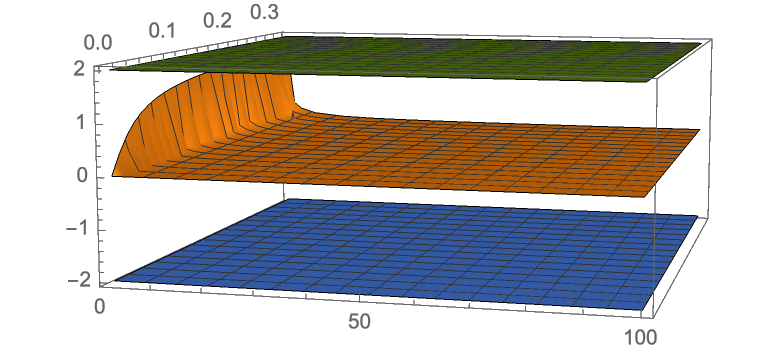}
\includegraphics[scale=0.45]{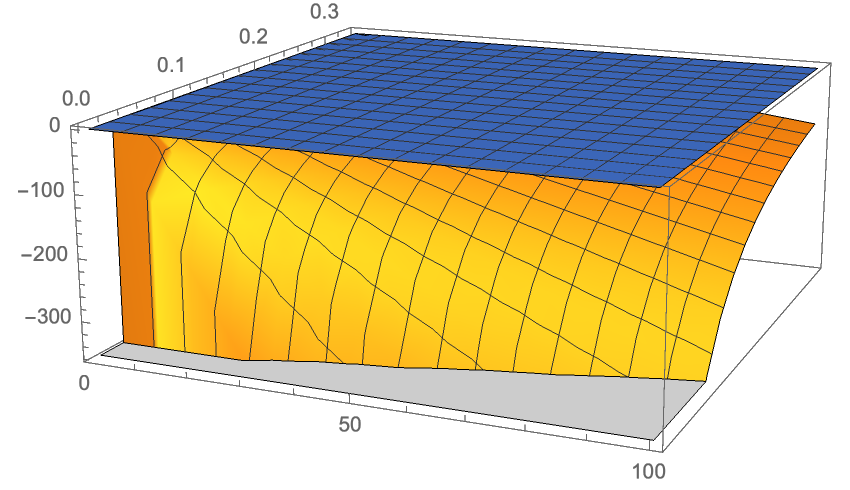}
\caption{\footnotesize  Left and right figures shows the surfaces $d_+(s;A)$ and $d_-(s;A)$, for $s\in(2,100)$ and the parameter $A\in[0,1/3)$. Green and blue planes are $d=2$ and $d=-2$ respectively. In the domain of these figures we observe the brown surface, $d_+(s)\in(0,2)$ and the yellow plane, $d_-(s)\in(-\infty,-2)$.}
\label{fig:dMm}
\end{figure}

 \begin{equation}
\label{valor d-circular}
d_{\pm}(s;A)=\frac{-2\pm\sqrt{\left(3 A s \ln \left(\frac{s+2}{s-2}\right)-6 A\right)^2+4-36 A^2}}{3 A \ln \left(\frac{s+2}{s-2}\right)}.
\end{equation}
Note that, since $A\in[0,1/3)$ and $s\in(2,+\infty)$ the radicand is always a positive number, hence $d_{\pm}(s; A)\in \mathbb{R}$. We denote $d_{\pm}(s;A)=d_{\pm}(s)$ for a cleaner notation. 
Moreover, setting $s=s^*$ and $d=d^*=d_{\pm}(s^*)$ in equation $F_1=0$, and fixing, we arrive to

$$|c^*|=\dfrac{(4-d^{*2})}{2}\sqrt{\frac{ \left(s^{*2}-4\right) (3 A d^*+s^*)}{ s^{*2}-d^{*2}}}.$$
Considering that all the factors inside the square root are positive numbers, we have $c^*\in \mathbb{R}$. For our problem to be physically meaningful, it is required that $-2\leq d_{\pm}(s)\leq 2$. Taking into account the formula \eqref{valor d-circular}, then the limits of $d_\pm(s)$ in the interval $(2,\infty)$ are given by  
$$\underset{s\to 2^+}{\text{lim}}d_-(s)=-2,\quad \underset{s\to \infty}{\text{lim}}d_-(s)=-\infty,\quad \underset{s\to 2^+}{\text{lim}}d_+(s)=2,\quad \underset{s\to \infty}{\text{lim}}d_+(s)=0.$$
Secondly, we have numerical evidence that $d_-(s)\in(-\infty,-2)$ and $d_+(s)\in(0,2)$, see Figure~\ref{fig:dMm}. Therefore, when $A\neq0$ we have that, for each fixed $s^*$, there is only one posible value for $d^*=d_+(s^*)$ satisfying that $F_2(s^*,d(s^*))=0$.

\begin{figure}[H]
\centering
\includegraphics[scale=0.4]{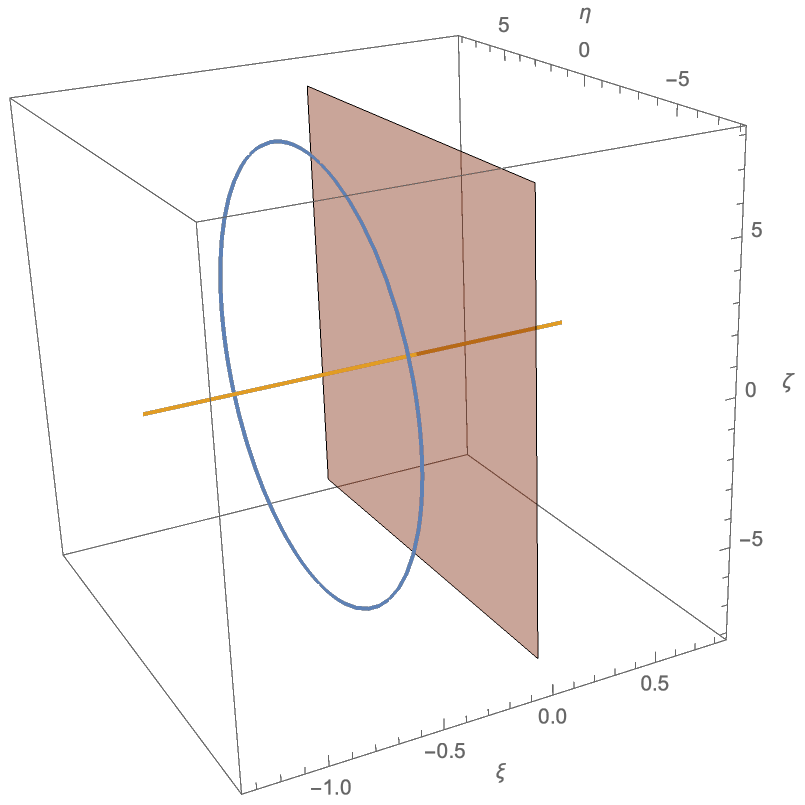}
\caption{\footnotesize  This figure shows the plot of the periodic orbit, The yellow straight line is a representation of the segment. In contrast with the constant density case $A=0$, the periodic orbit does not lie on the $\eta\zeta$-plane. For $A=0.25$, the initial conditions of this orbit are $r=4.8926$, $\theta=0$, $x=-0.5042$, $P_r=0$, $P_\theta=3.1023$, $P_x=0$.}
\label{fig:dMm2}
\end{figure}


\begin{theorem}[Circular orbit]
\label{circular-orbit}
For each fixed $s=s^*$, we consider $(s^*,d^*,c^*)$ as they were defined above. Then, the system \eqref{sistema-ham-segmento-fijo-densidad-lineal1} has a unique circular orbit with period $T=2\pi r^{*2}/c^*$, where is obtained as $r^*=r(s^*,d^*)$ by using the relations \eqref{relacion r-x-s-d}. 
 \end{theorem}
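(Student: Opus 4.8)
The plan is to assemble the circular orbit from the pieces already constructed before the statement, and then verify that the resulting curve in the full phase space is genuinely a periodic (circular) orbit with the claimed period. First I would recall that, for a fixed admissible value $s=s^*\in(2,+\infty)$, the quantity $d^*=d_+(s^*)$ is the unique root of $F_2$ lying in $(-2,2)$ (as established via \eqref{valor d-circular} and the limit computations together with the numerical evidence summarized in Figure~\ref{fig:dMm}), and $c^*$ is then determined by the displayed formula so that $F_1(s^*,d^*;c^*)=0$. By the discussion following \eqref{sistema-ham-reducido-segmento-densidad-lineal3}, the vanishing of $(F_1,F_2)$ at $(s^*,d^*)$ is equivalent to $(r^*,x^*,0,0)$, with $r^*=r(s^*,d^*)$ and $x^*=x(s^*,d^*)$ from \eqref{relacion r-x-s-d}, being an equilibrium of the reduced planar system \eqref{sistema-ham-segmento-fijo-densidad-lineal1-reducido} for the parameter choice $P_\theta=c=c^*$.

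Next I would lift this equilibrium back to the non-reduced system \eqref{sistema-ham-segmento-fijo-densidad-lineal1} in cylindrical coordinates. Setting $r(t)\equiv r^*$, $P_r(t)\equiv 0$, $x(t)\equiv x^*$, $P_x(t)\equiv 0$, $P_\theta(t)\equiv c^*$, and $\theta(t)=\theta_0+(c^*/r^{*2})\,t$, one checks directly that all six equations in \eqref{sistema-ham-segmento-fijo-densidad-lineal1} are satisfied: the equations for $\dot r,\dot P_r,\dot x,\dot P_x,\dot P_\theta$ reduce exactly to the equilibrium conditions of the reduced system (because $s,d$ depend only on $r,x$), while $\dot\theta=P_\theta/r^2$ is satisfied by construction. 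Since $\dot\theta=c^*/r^{*2}$ is a nonzero constant, $\theta$ is monotone and the solution is periodic with period $T=2\pi/|\dot\theta|=2\pi r^{*2}/c^*$ (taking $c^*>0$, consistent with Remark~\ref{remark:Slope} and the sign conventions in use). Transforming back through \eqref{hamiltoniano-fijo-cilindricas}, the orbit is the circle $\eta^2+\zeta^2=r^{*2}$, $\xi=x^*$, traversed at constant angular speed — hence a genuine circular periodic orbit of \eqref{eq:EqMovSegmentoFijo}. Uniqueness (for the given $s^*$) follows from the uniqueness of $d^*=d_+(s^*)$ in the physically meaningful range and the fact that $c^*$ is then forced by $F_1=0$; I would also note that $P_r=P_x=0$ is forced since a circular orbit has $r,x$ constant.

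The only genuinely delicate point is the admissibility bookkeeping, not the dynamics: one must ensure that the constructed $(s^*,d^*)$ corresponds to an actual point of the configuration space, i.e.\ that $r^*=r(s^*,d^*)>0$ and that $(r^*,x^*)$ avoids the segment. This is where I would lean on Lemma~\ref{lemma:sd}: since $s^*>2$ and $-2<d^*<2$ strictly, the formula $r^*=\tfrac14\sqrt{(s^{*2}-4)(4-d^{*2})}$ gives $r^*>0$, so the particle is off the $\xi$-axis and in particular off $\mathcal S$, and the denominators $s^2-4$, $s^2-d^2$ appearing in \eqref{sistema-ham-segmento-fijo-densidad-lineal1} are nonzero along the orbit. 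I would also remark that $d^*=d_+(s^*)\in(0,2)$ forces $x^*=-A-d^*s^*/4<-A$, which is the analytic reflection of the statement (illustrated in Figure~\ref{fig:dMm2}) that, unlike the homogeneous case $A=0$, the circular orbit does not lie in the plane $\xi=0$ through the segment's midpoint but is displaced toward the denser end. With these observations the verification is complete.
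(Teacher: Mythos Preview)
Your argument is correct and follows essentially the same route as the paper's own proof: identify $(r^*,x^*,0,0)$ as an equilibrium of the reduced system via $F_1=F_2=0$, lift it to the full cylindrical system with $\theta(t)=\theta_0+(c^*/r^{*2})t$, and read off the period; your version is in fact more careful about admissibility ($r^*>0$, denominators nonvanishing) than the paper's. One small slip: your appeal to Remark~\ref{remark:Slope} to justify taking $c^*>0$ is misplaced, since that remark concerns the sign of the density slope $\alpha$, not of the angular momentum---the paper simply notes both signs of $c^*$ (prograde and retrograde) give circular orbits, which is harmless for the period formula.
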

\begin{proof}
Assuming the hypothesis $d=d^*\in(0,2)$, we have that the equations \eqref{sistema-ham-reducido-segmento-densidad-lineal3} vanish. Moreover, considering $P_r=P_x=0$ and the two possible values of $c^*$, (prograde and retrograde), hence we obtain the following pair of solutions of system \eqref{sistema-ham-segmento-fijo-densidad-lineal1} 
  
$$r(t)=r^*,\quad \theta(t)=\dfrac{c^*}{r^{*2}}t+\theta_0,\quad x(t)=x^*,\quad P_r(t)=P_x(t)=0,\quad P_\theta(t)=c^*,$$
where $ x^*=x(s^*,d^*)$. Hence, an equilibrium point $(r^*,x^*, 0,0)$ give us a circular solution of the form 
\begin{gather}
\begin{aligned}
\label{solucion-fijo-cilindricas}
\xi=&\,x^*,\quad \eta=&\,r^* \cos\left({c^*}/{r^{*2}}t+\theta_0\right),\quad \zeta=&\,r^* \sin\left({c^*}/{r^{*2}}t+\theta_0\right).
 \end{aligned}
 \end{gather}
Moreover, considering the above expression for $\theta(t)$, we have $T=2\pi r^{*2}/c^*$.
\end{proof}

\begin{remark}
The circular orbit is in the $\eta\zeta$-plane only for the constant density case $A=0$ \cite{Riaguas1999}. A Taylor expansion for the function $d(A)$ in a small neighborhood of $A$ shows that the orbit shifts to the left as it is seen in Figure~\ref{fig:dMm2}
\begin{equation}
\label{desplazamiento-orbita }
d(A)=d(0) + d'(0)A + \mathcal O (A^2)= {3x}/{16} \left[x \ln \left({(x+4)}/{(x-4)}\right)-8\right]A + \mathcal O (A^2),
\end{equation}
with $x=\sqrt{c^4+16}+c^2$. Thus, $d'(0)$ is a function in $c^2\in(0,+\infty)$. However, since $x\in(4,+\infty)$ and $c^2\in(0,+\infty)$ are in one to one correspondence, we study the sign of $d'(0)$ using the variable $x$. Note that the expression $d'(0)\to0$ as $x\to\infty$, and it is strictly decreasing in $x$. Hence, $d'(0)>0$ for all $x\in(4,+\infty)$ and equivalently for all $c^2\in(0,+\infty)$.
\end{remark}

\begin{figure}[H]
\includegraphics[scale=0.24]{./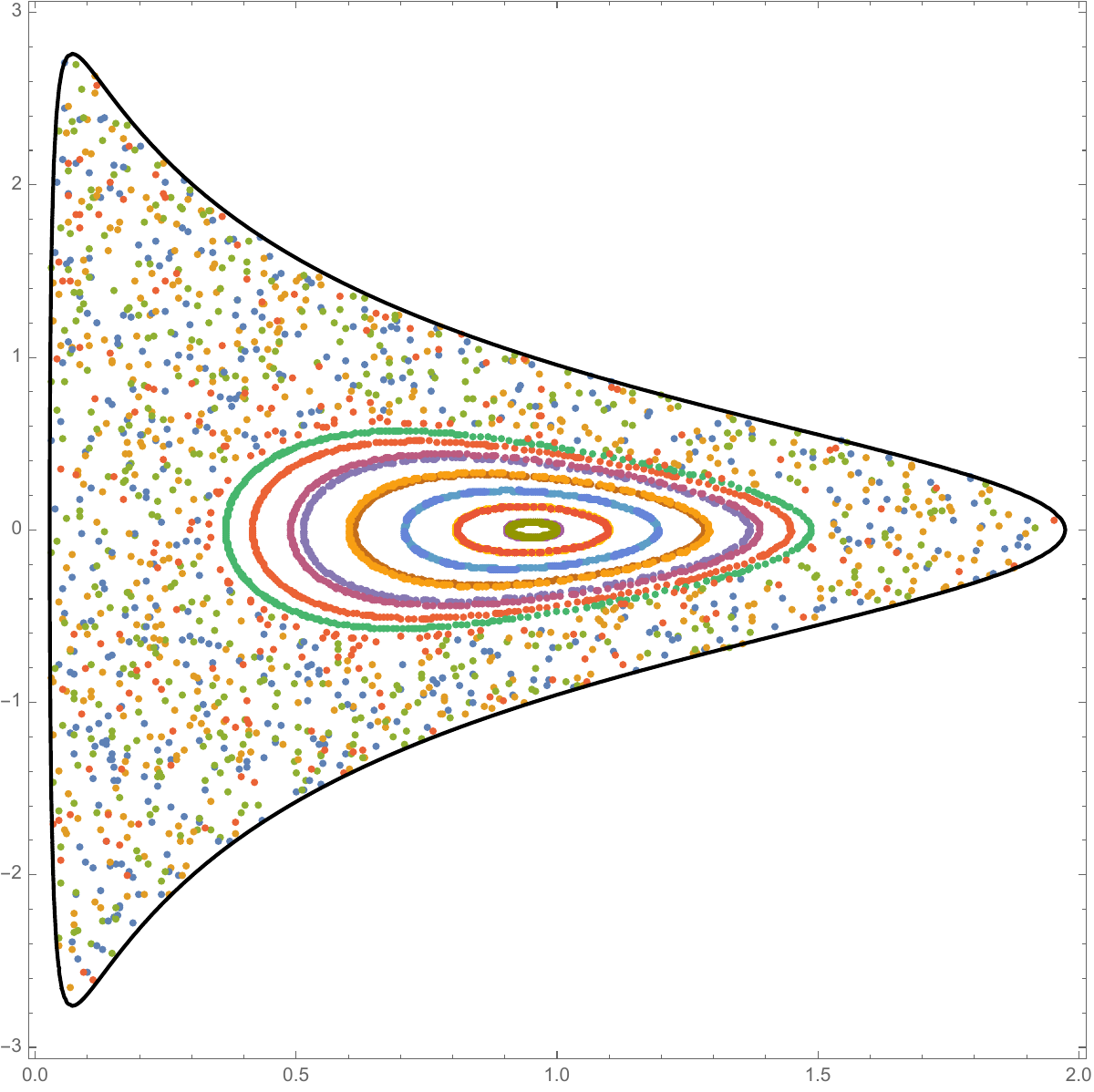}
\includegraphics[scale=0.24]{./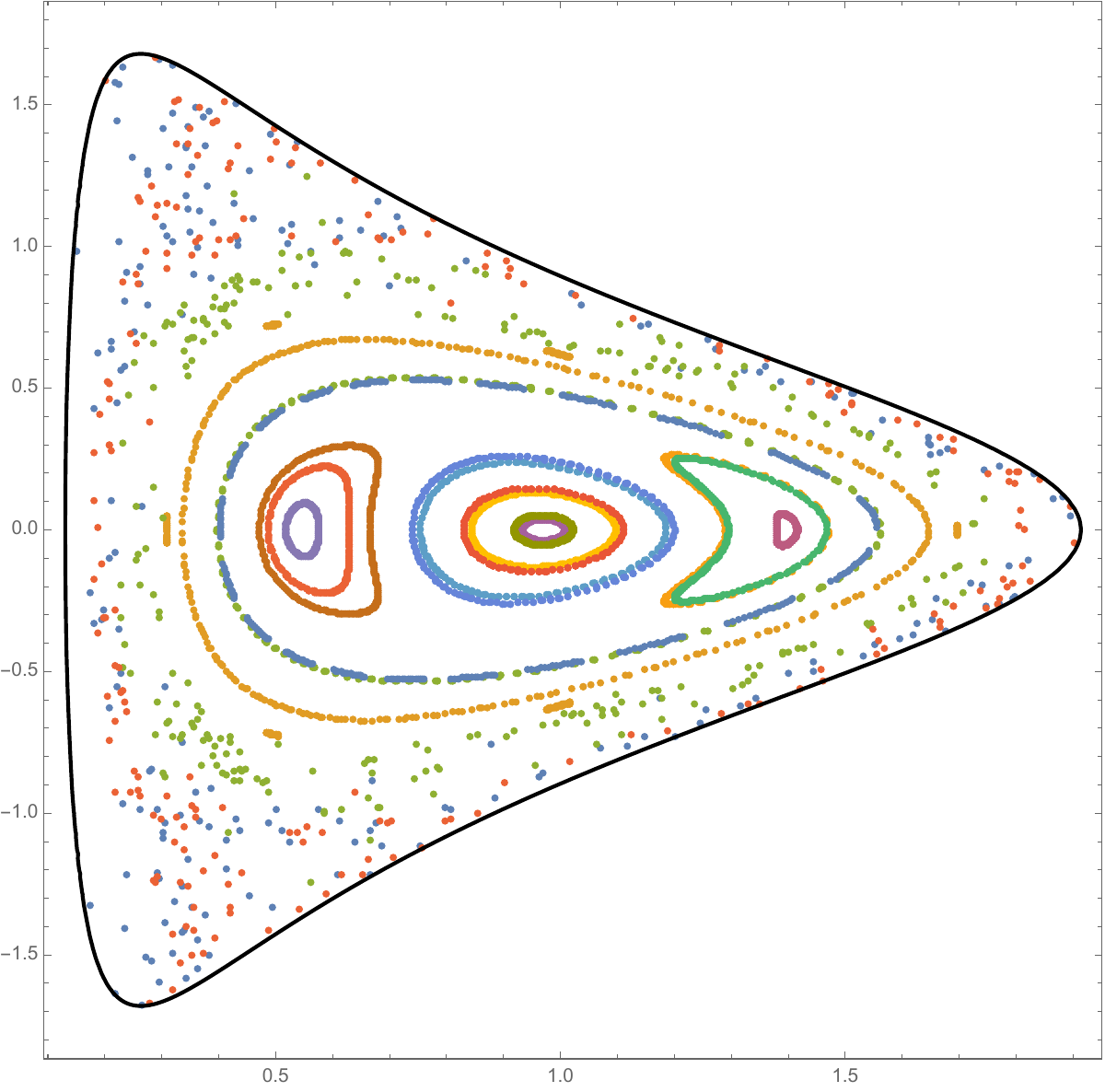}
\includegraphics[scale=0.24]{./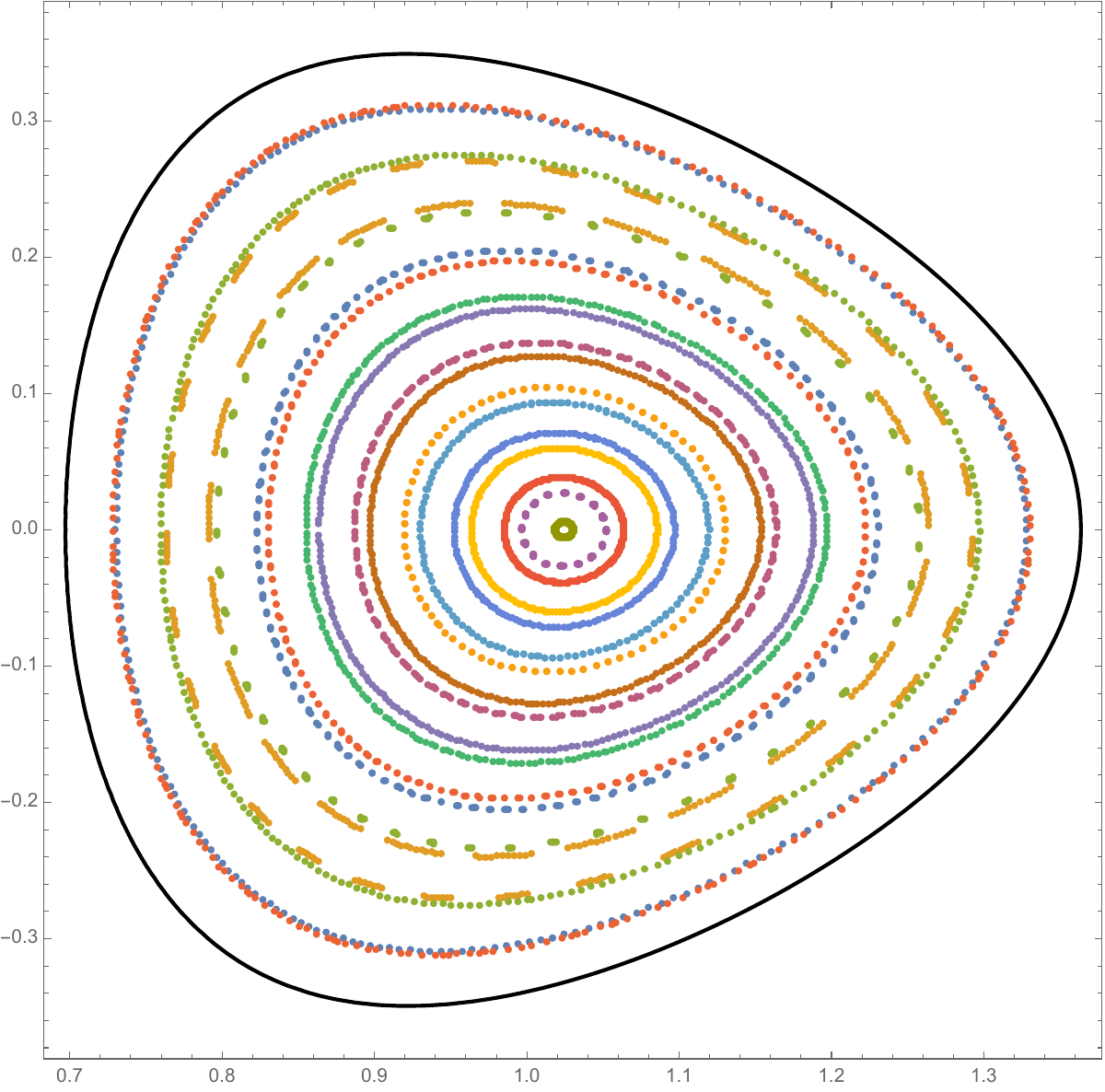}
\caption{\footnotesize Poincar\'e sections for $A=0$: From left to right, $c= 0.1,\, 0.35,\, 0.9$. These sections reproduce the analysis from \cite{RiaguasElipeLara1999} and are constructed using the same units as in that study. Specifically, the length unit is taken as $2L$.}
\label{fig:SeccionesPoincareA0}
\end{figure}

\section{Quasi-periodic orbits around a non-homogeneous fixed straight segment}
\label{sec:QuasiPeriodicOrbits}
In the previous section, we obtained periodic orbits with constant $r(t)=r_0$ and $x(t)=x_0$. In this section, we will search for periodic orbits in the reduced space $(r,x,P_r,P_x)$. The 3-dimensional reconstruction of these reduced-periodic orbits leads to quasi-periodic orbits of the full system in $(r,\theta,x,P_r,P_\theta,P_x)$, unless any commensurability condition is imposed among the momenta.
\begin{figure}[h]
\includegraphics[scale=0.18]{./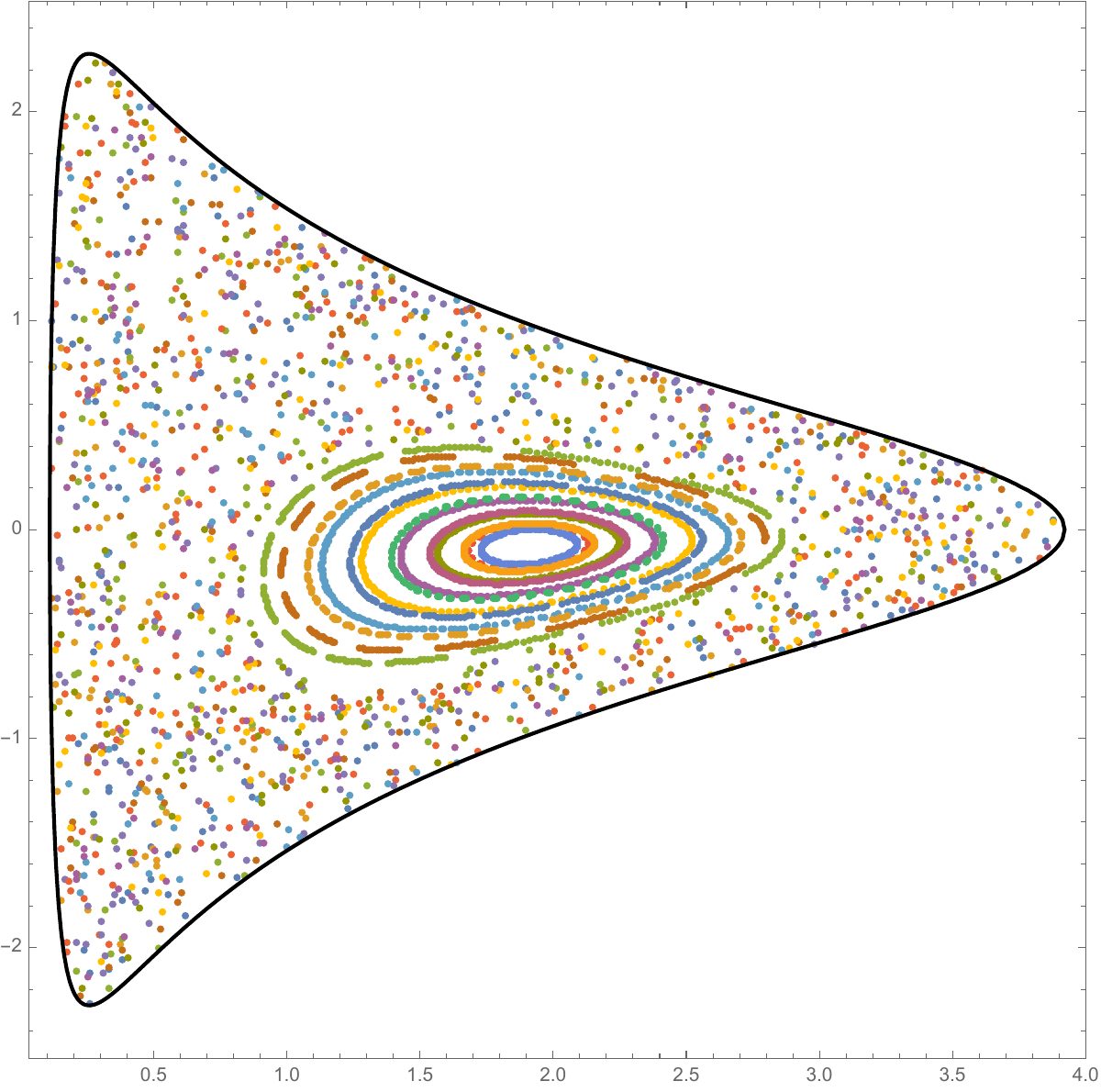}
\includegraphics[scale=0.18]{./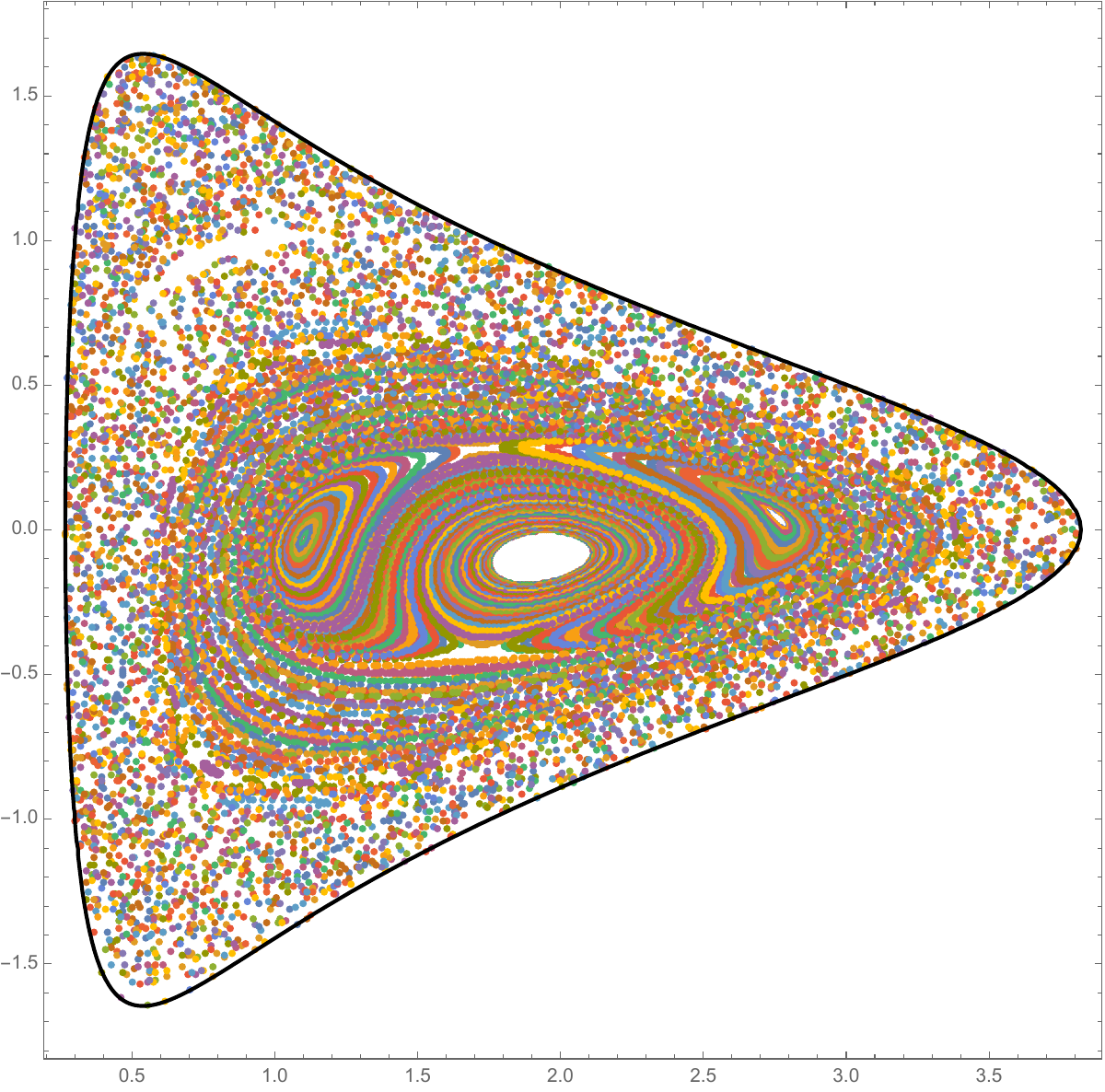}
\includegraphics[scale=0.18]{./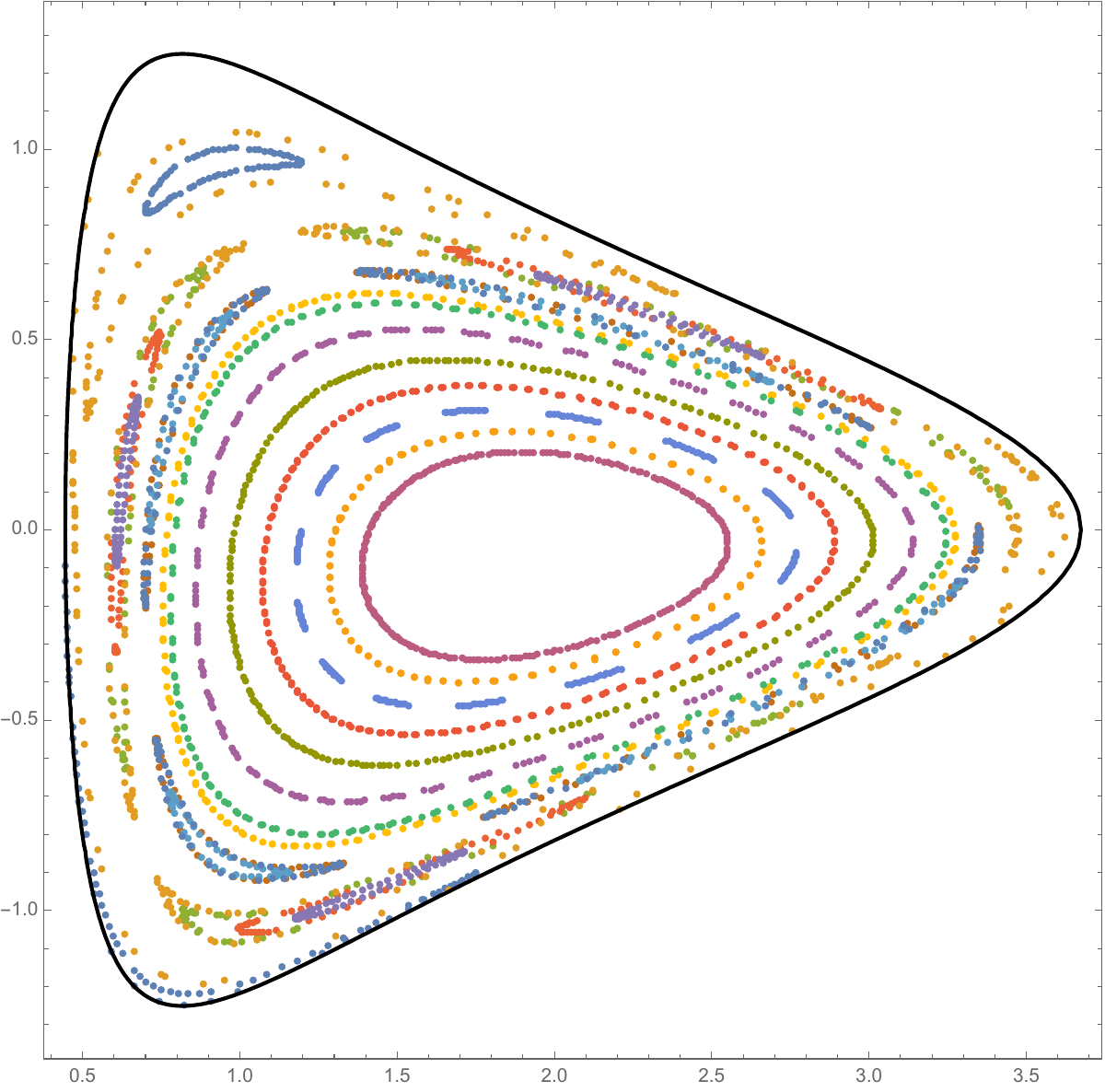}
\includegraphics[scale=0.18]{./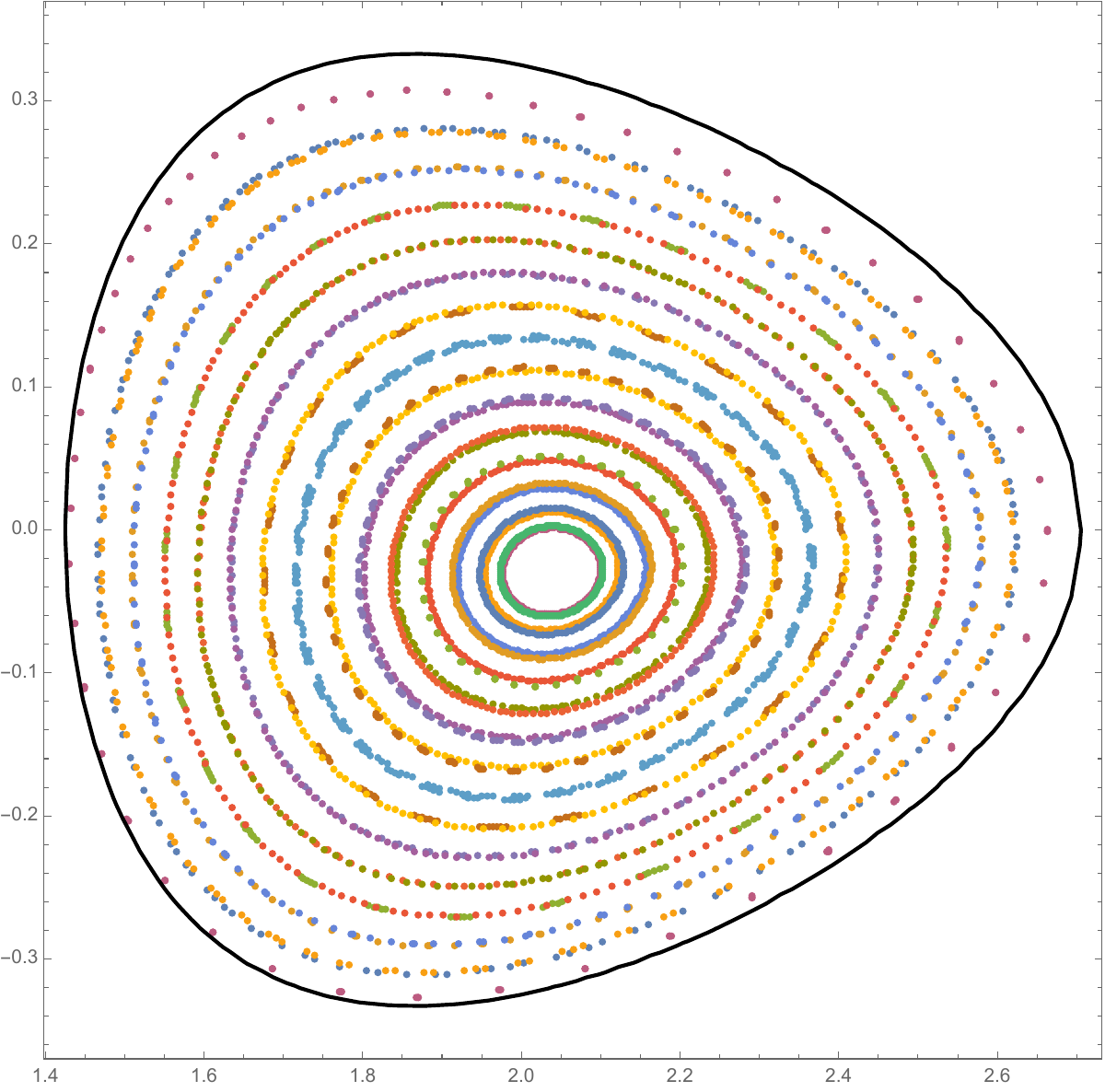}
\includegraphics[scale=0.18]{./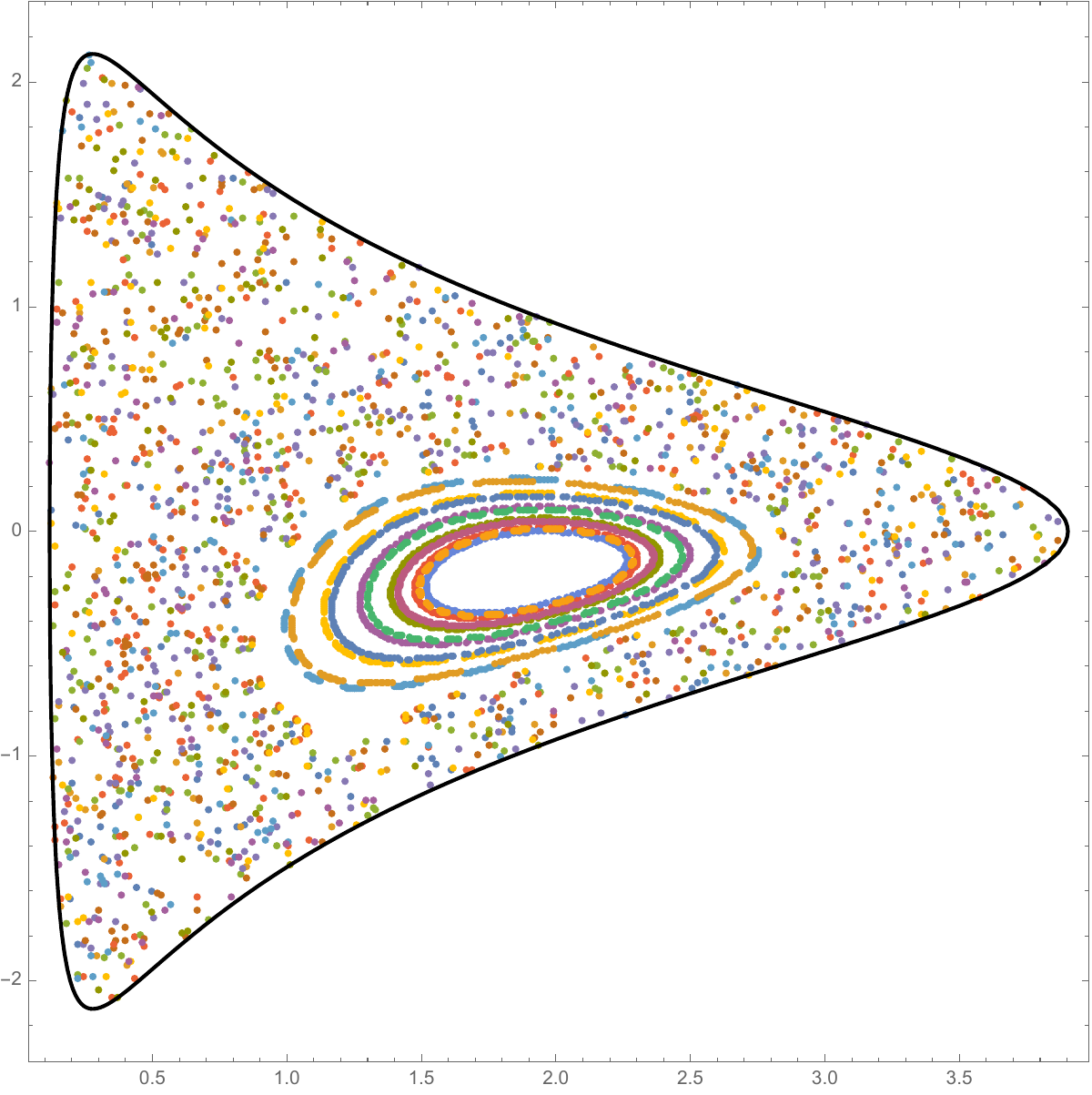}
\includegraphics[scale=0.18]{./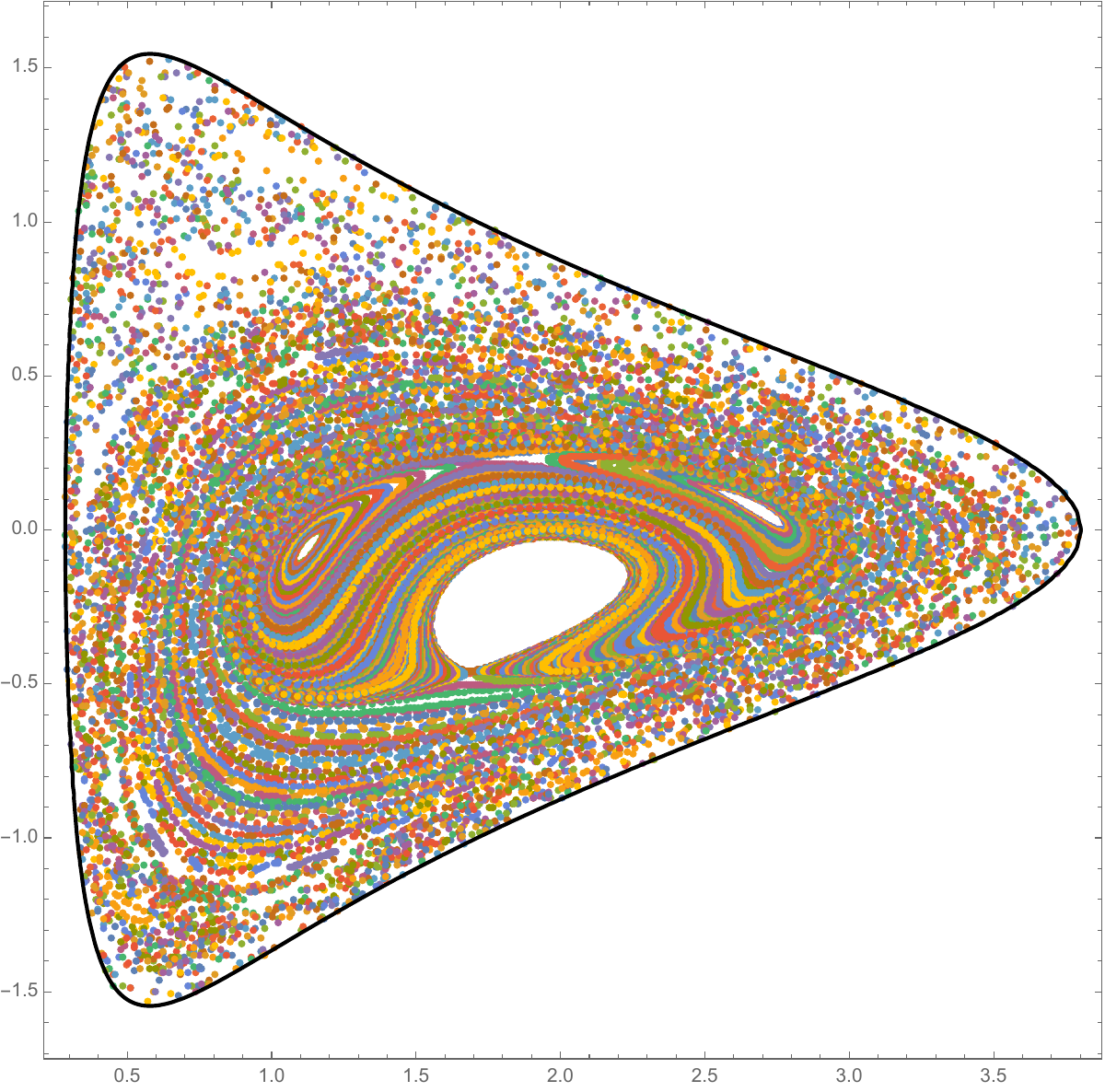}
\includegraphics[scale=0.18]{./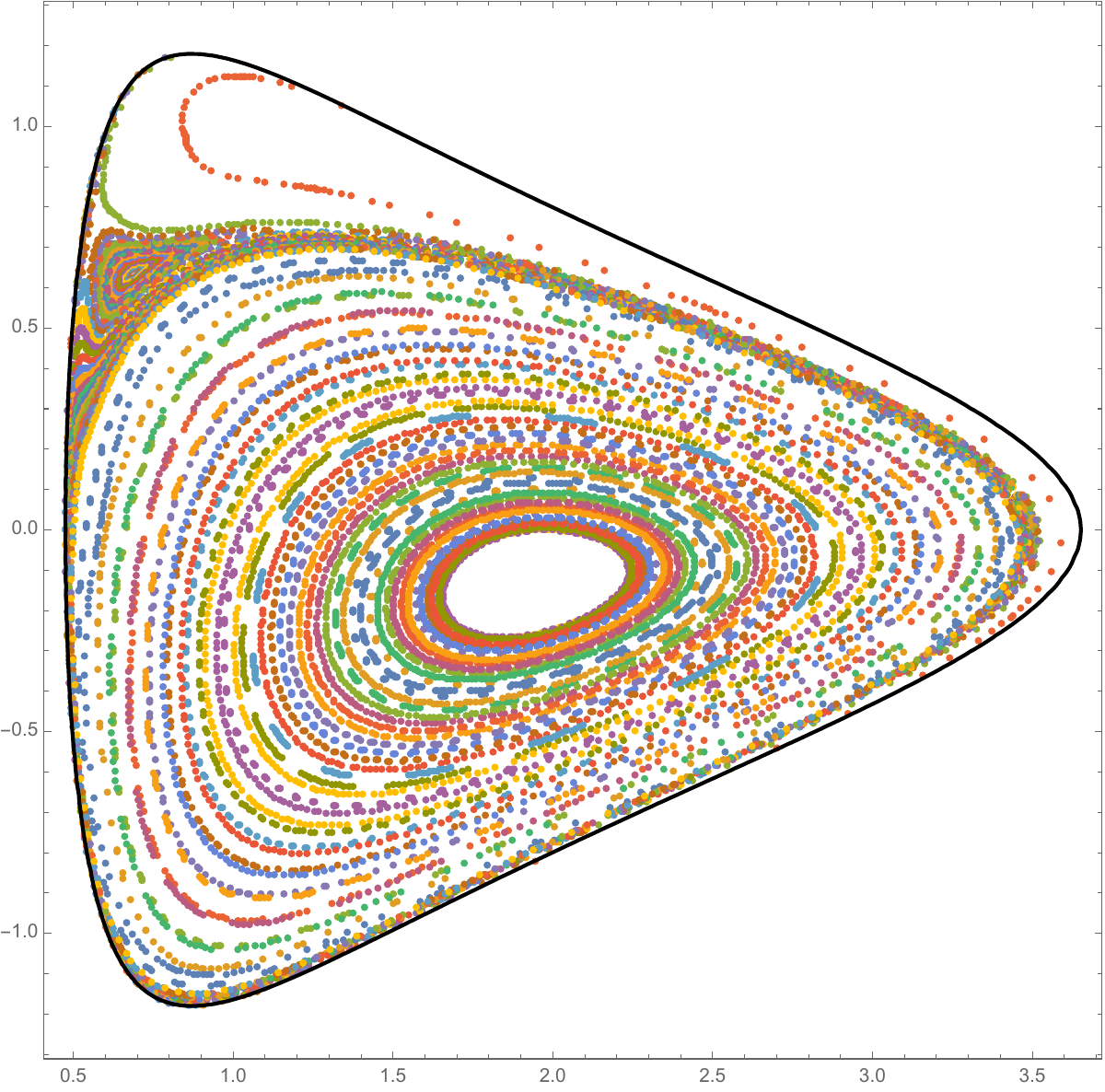}
\includegraphics[scale=0.18]{./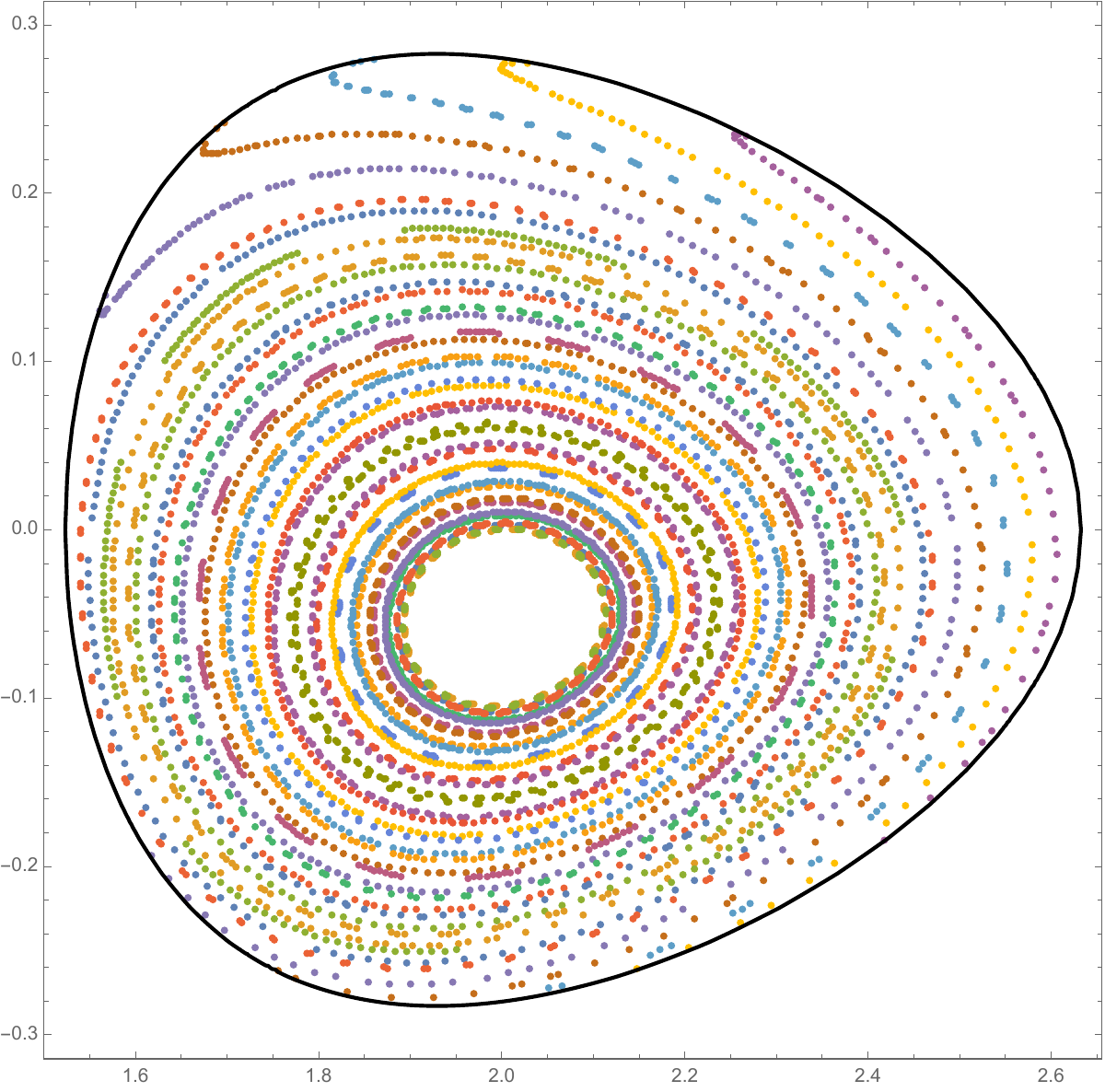}
\caption{\footnotesize Poincar\'e sections for $A \neq 0$: First and second rows with $A = 1/8,\,1/4$, respectively. In both cases, from left to right, $c = 0.35,\, 0.7,\,1,\, 1.8$. These sections are constructed using the unit scaling defined in \eqref{scaling}.}
\label{fig:SeccionesPoincareA14}
\end{figure}

\begin{figure}[h]
\includegraphics[scale=0.372]{./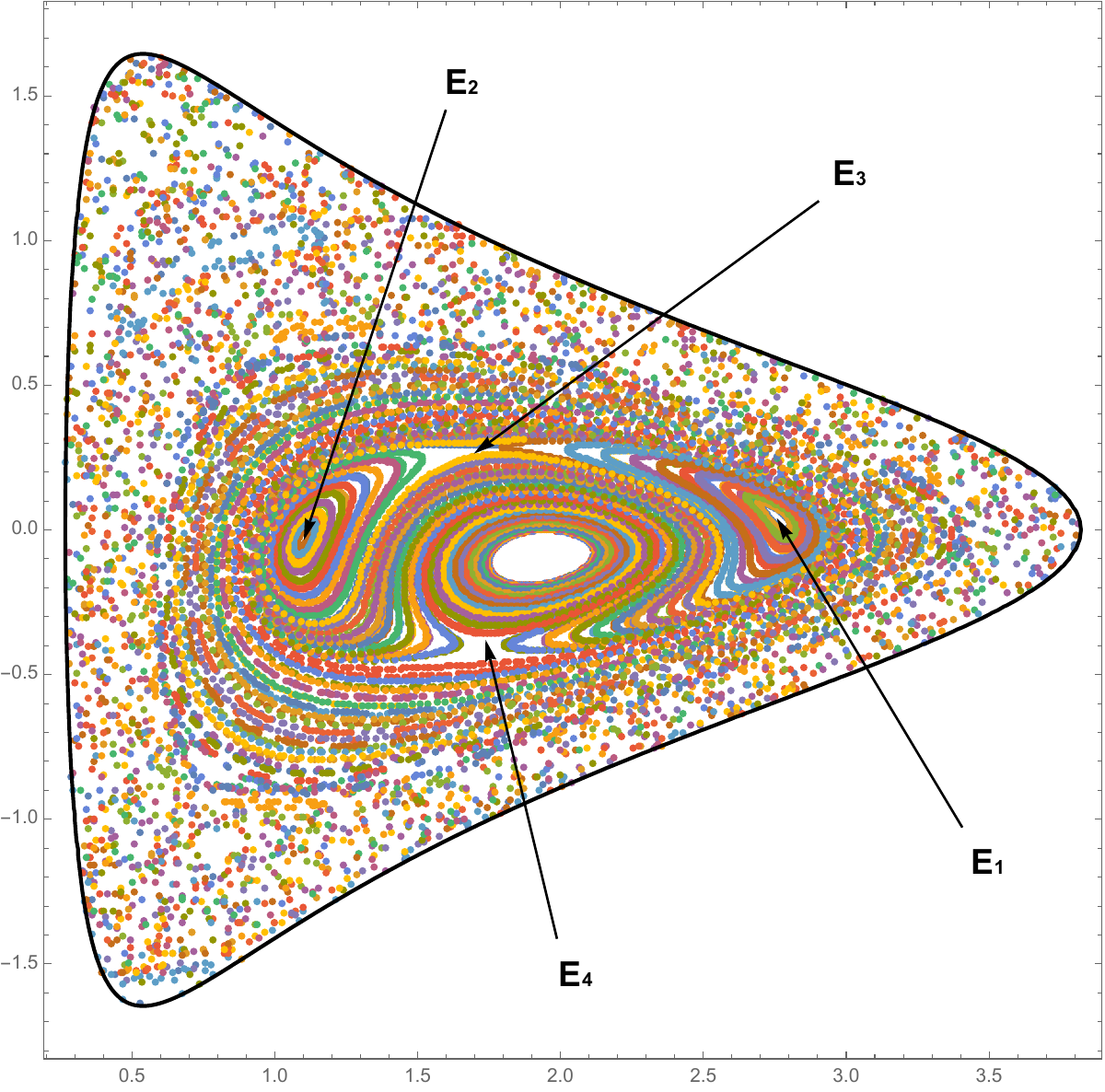}
\includegraphics[scale=0.372]{./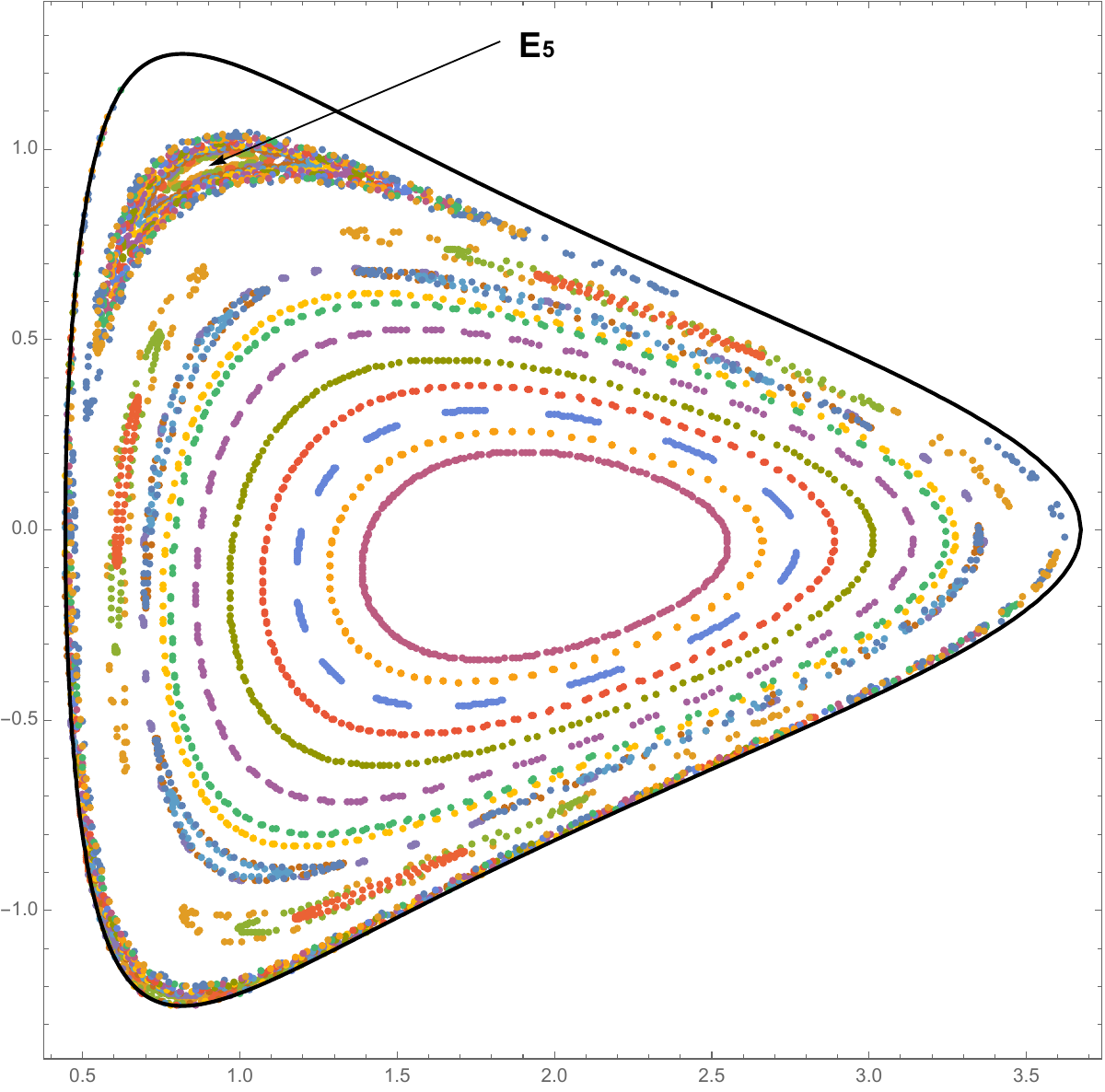}
\caption{\footnotesize The relative equilibria $E_i$ for $i=1,\ldots,5$ correspond with examples of quasi-periodic orbits for $A=1/8$. From left to right, we consider $c=0.7$ and $c=1$ respectively.}
\label{fig:SeccionesPoincareQuasiPO}
\end{figure}
Reduced-periodic orbits will be detected through suitable Poincar\'e sections, allowing us to analyze the influence of the parameter $A$ and the integral $P_\theta$ in the dynamics. Precisely, the case $A=0$ was studied in \cite{RiaguasElipeLara1999}, where the authors detected several quasi-periodic orbits. 

\begin{figure}[h!]
\includegraphics[scale=0.372]{./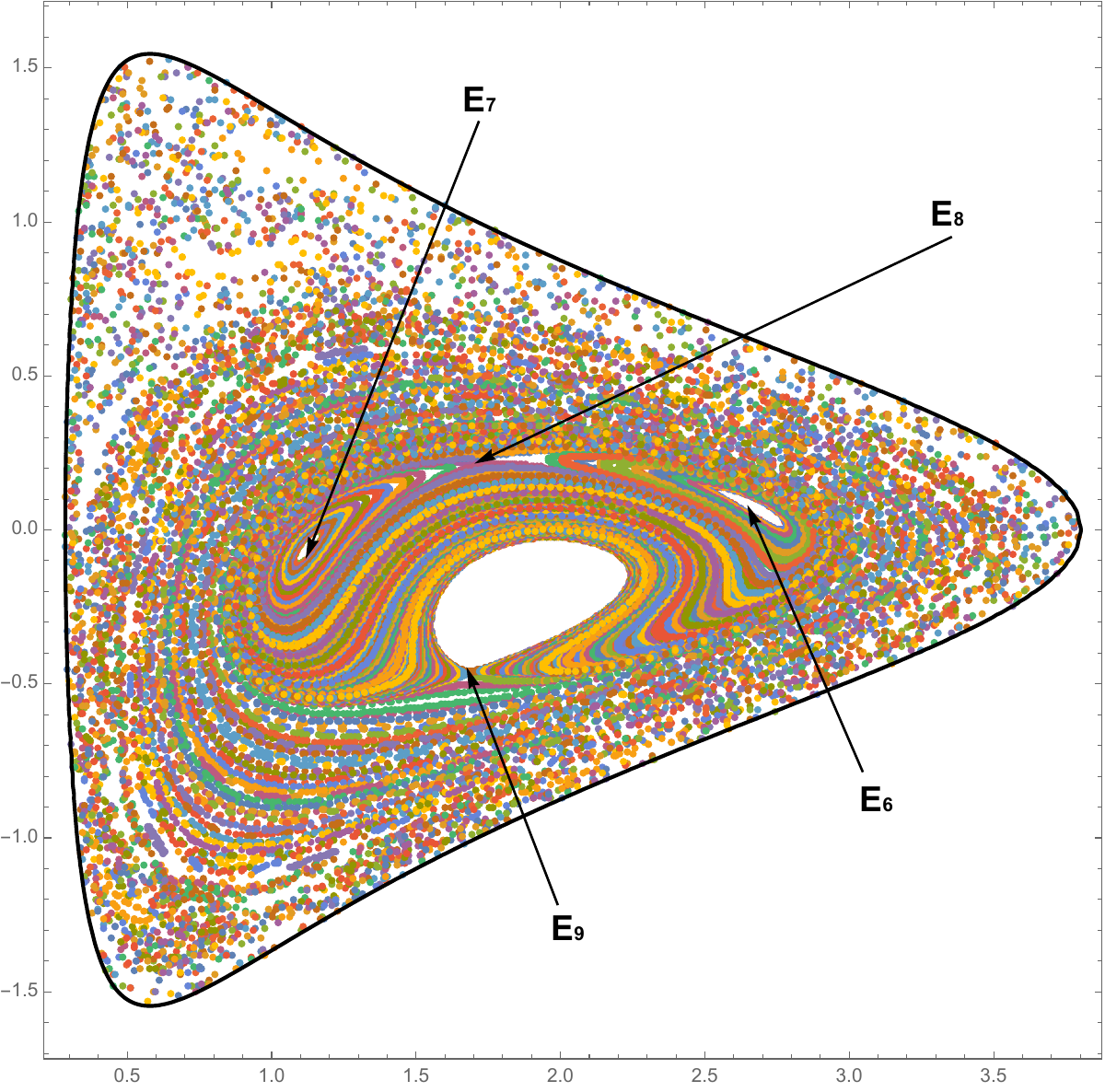}
\includegraphics[scale=0.372]{./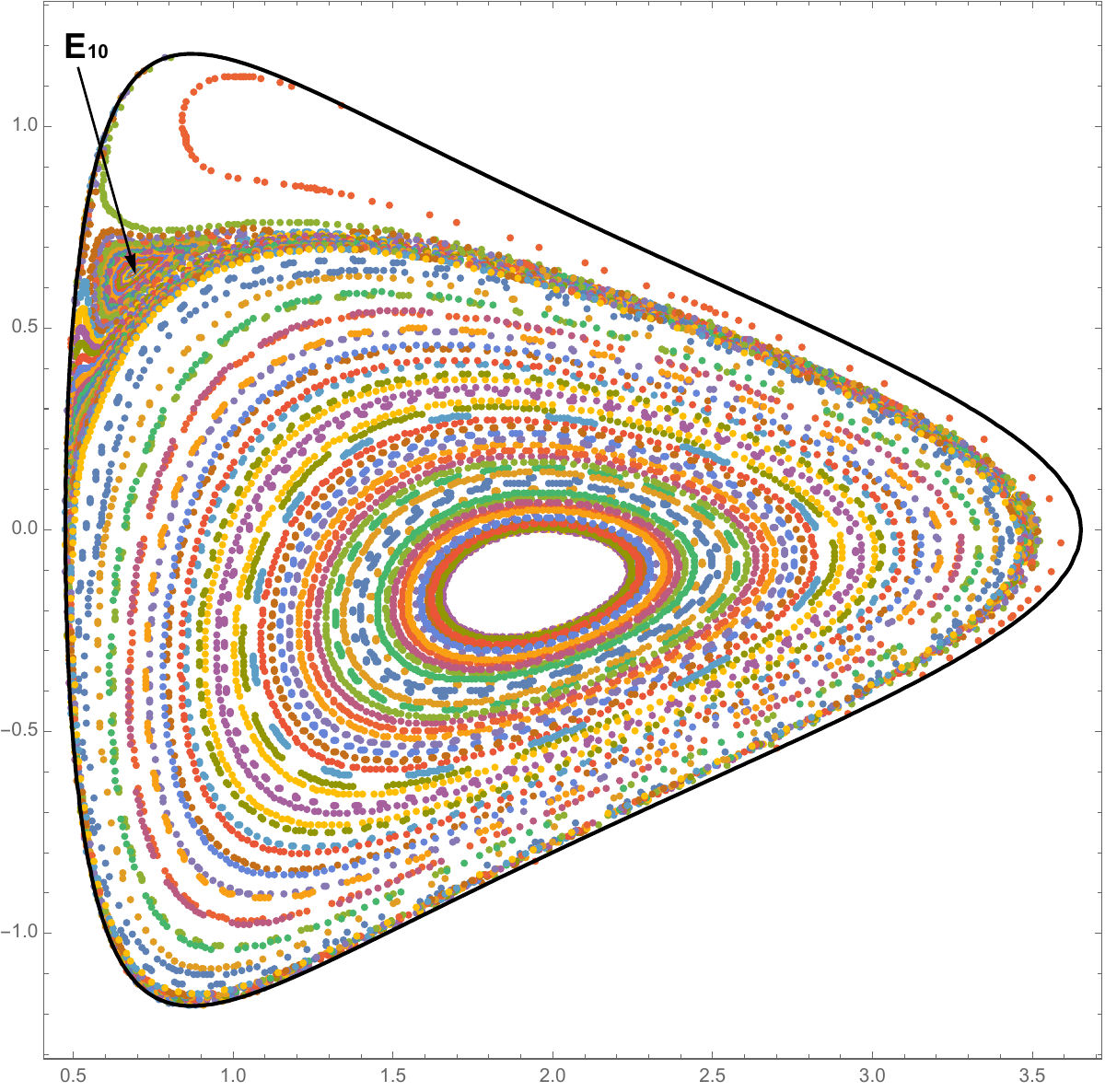}
\caption{\footnotesize The relative equilibria $E_i$ for $i=6,\ldots,10$ correspond with examples of quasi-periodic orbits for $A=1/4$.  From left to right, we consider $c=0.7$ and $c=1$ respectively.}
\label{fig:SeccionesPoincareQuasiPObis}
\end{figure}

Here, we compute Poincar\'e sections in the plane $(r,P_r)$ for $x=0$ and $P_x>0$. Figure~\ref{fig:SeccionesPoincareA0}, and Figure~\ref{fig:SeccionesPoincareA14} contain Poincar\'e sections for the cases $A=0$, $A\neq0$ respectively. Regardless of the value of $A$, these sections show a well-organized toroidal structure for high values of $P_\theta$. As the integral $P_\theta$ decreases, topological changes in the phase space are embodied through islands, elliptic and hyperbolic points, and an increasing chaotic area. We continue denoting $P_\theta=c$ as in the above section.

\begin{figure}[h]
\includegraphics[scale=0.337]{./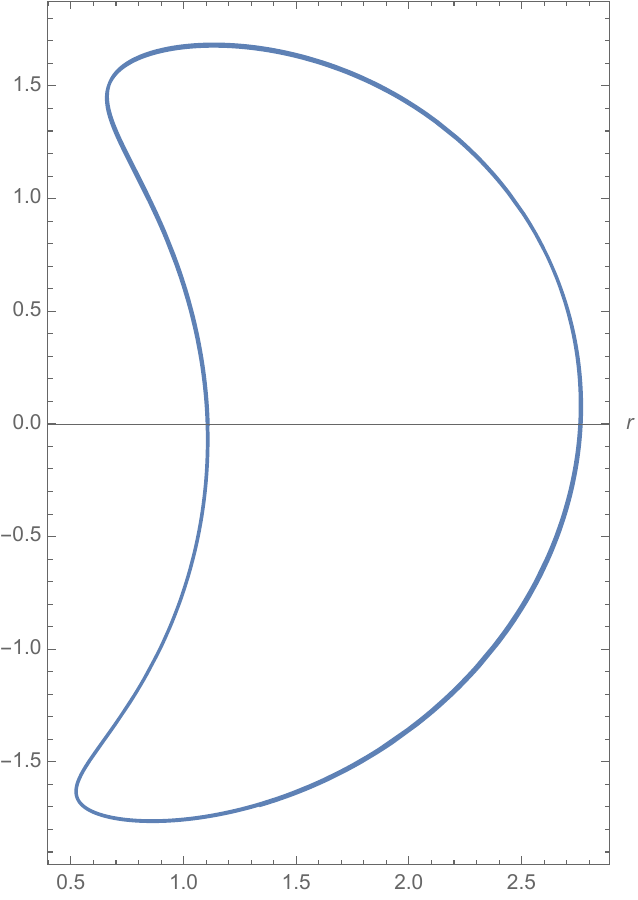}
\includegraphics[scale=0.337]{./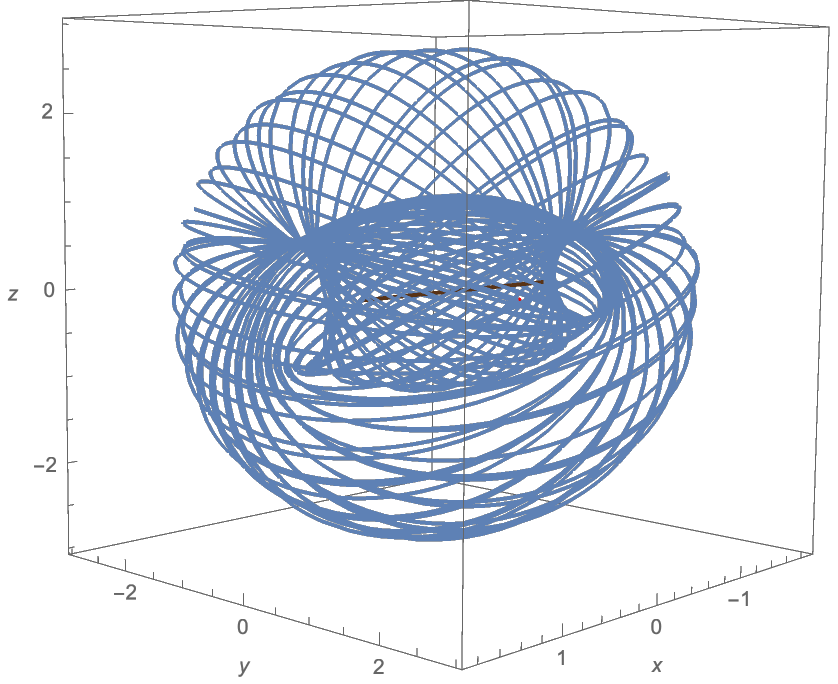}
\includegraphics[scale=0.337]{./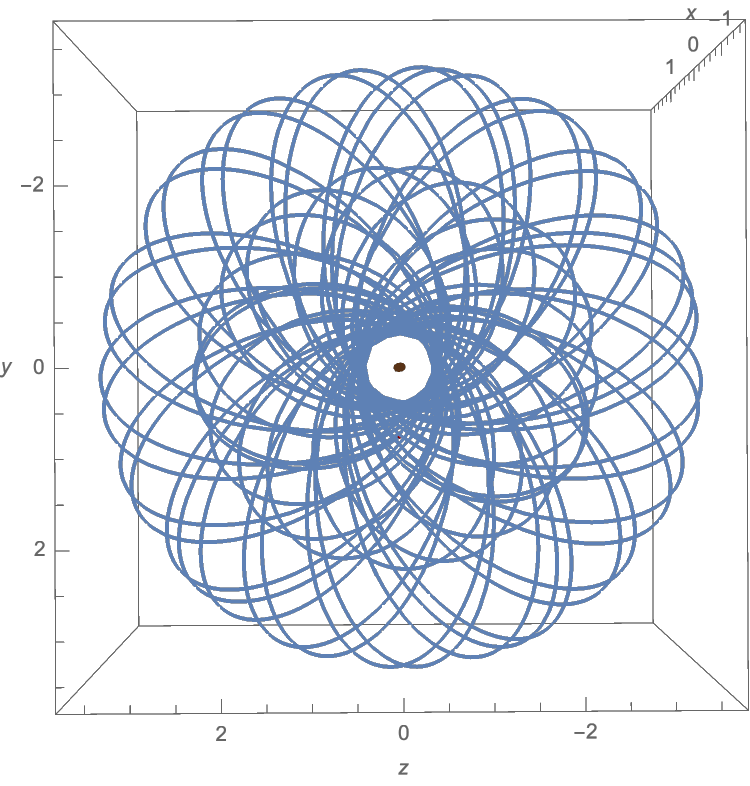}
\includegraphics[scale=0.337]{./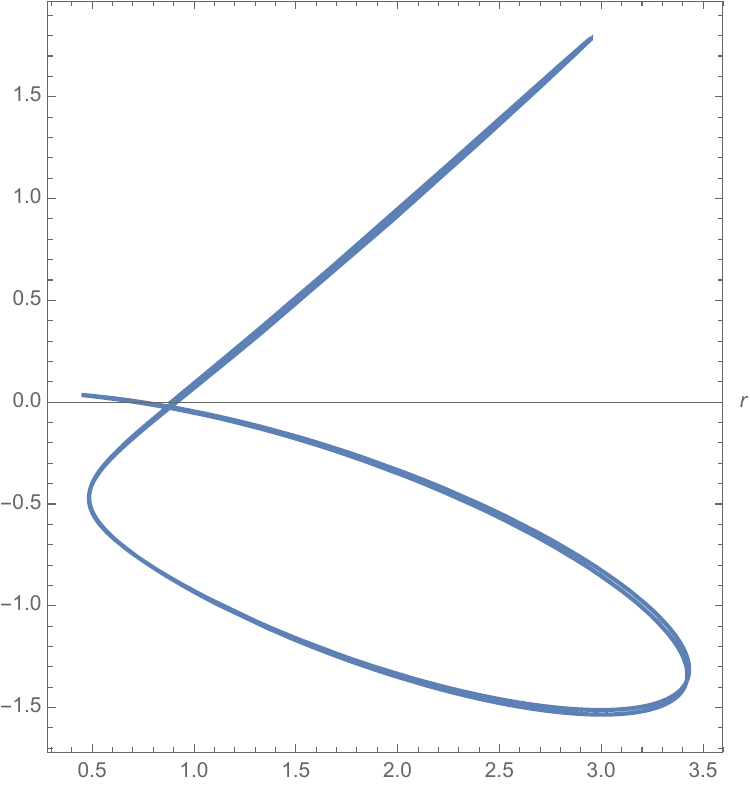}
\includegraphics[scale=0.337]{./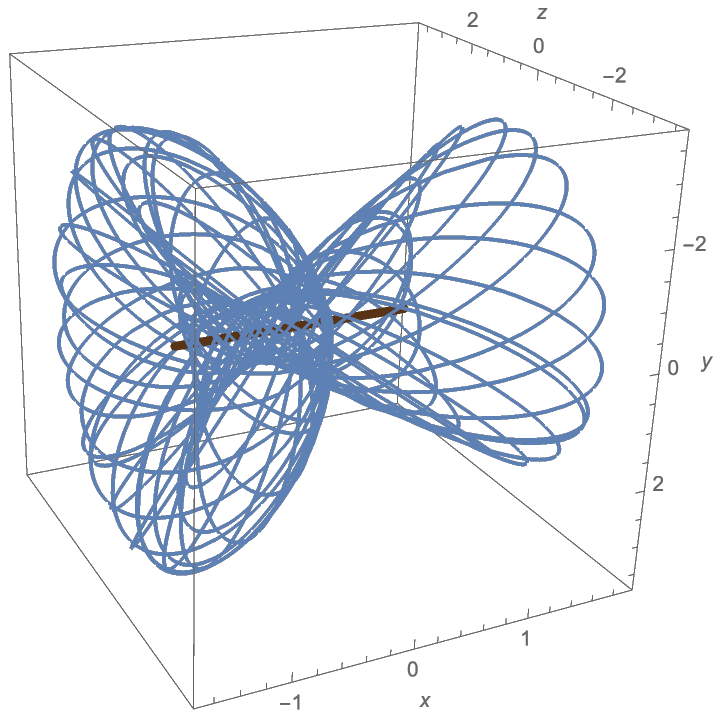}
\includegraphics[scale=0.337]{./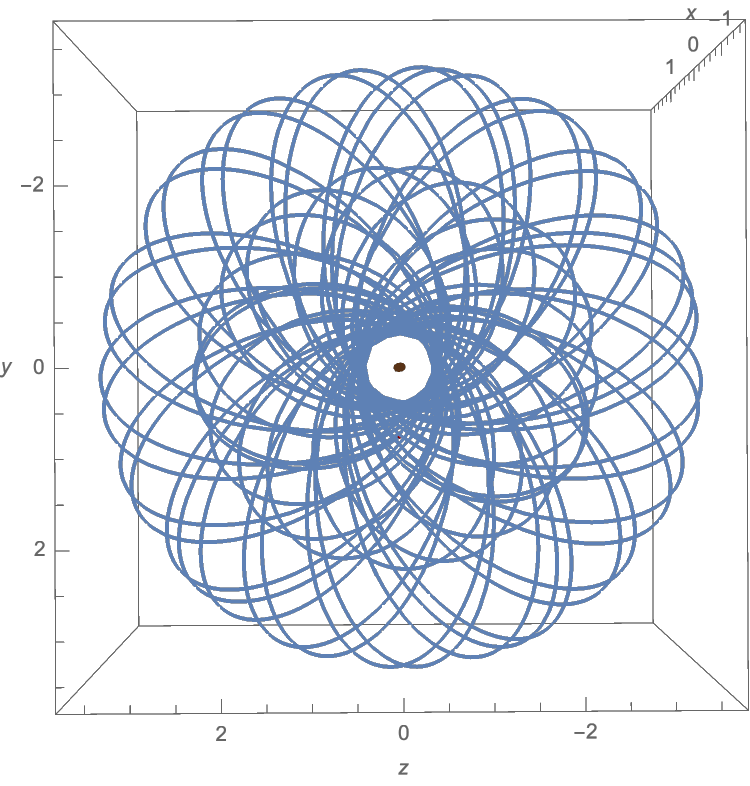}
\caption{\footnotesize The first row corresponds to the equilibria $E_1=(1.10845, 0,-0.0398045,1.34386)$ and the first figure displays $2D$ projection in the $rx$-plane, and the following figures are two different views of the $3D$ orbit in configuration space, illustrating the orbit over several revolutions of the angle $\theta$. The second row corresponds with the equilibria $E_5=(0.887805, 0,0.957145, 0.79564 )$ and shows figure $2D$ y $3D$.}
\label{fig:Simulations}
\end{figure}

This behavior was also observed in \cite{RiaguasElipeLara1999} for the constant density model. However, the case $A=0$ produces completely symmetric elliptic and hyperbolic points, while $A\neq0$ introduces asymmetries in the spatial distribution of these points. 
Notice that we deal with the case $A=0$ differently. Since this case was studied before in the literature  \cite{Riaguas1999, RiaguasElipeLara1999}, we facilitate the comparison with previous studies using the same length units. That is to say, for $A=0$, we consider $2L$ as the length unit, while for $A\neq0$, we follow the scaling of units given in \eqref{scaling}, which considers $L$ as the length unit.

\begin{figure}[h]
\includegraphics[scale=0.37]{./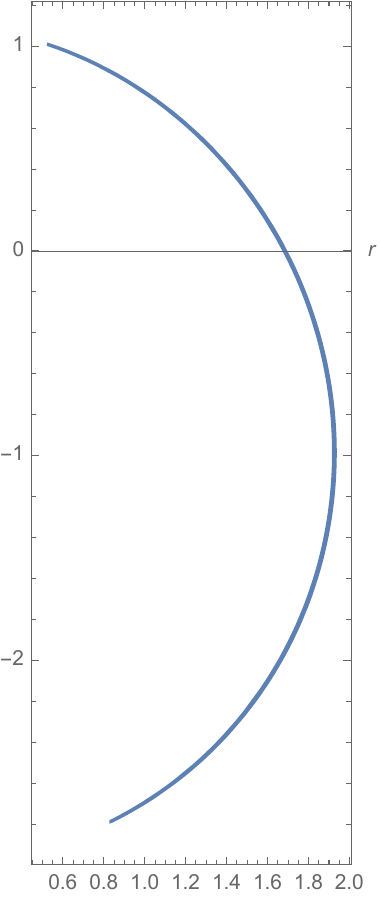}
\includegraphics[scale=0.35]{./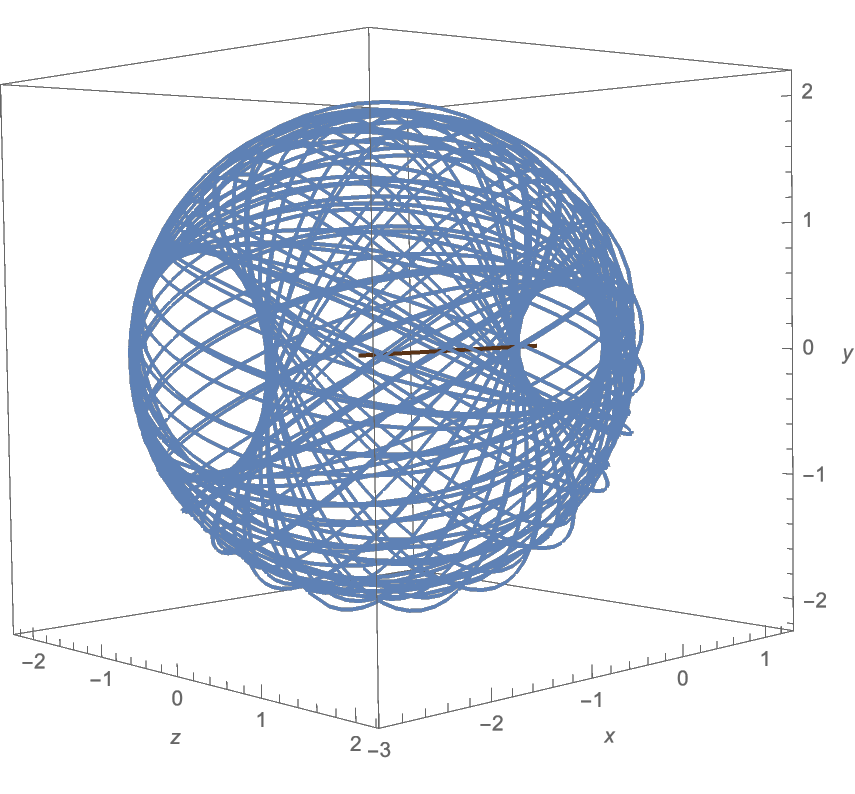}
\includegraphics[scale=0.39]{./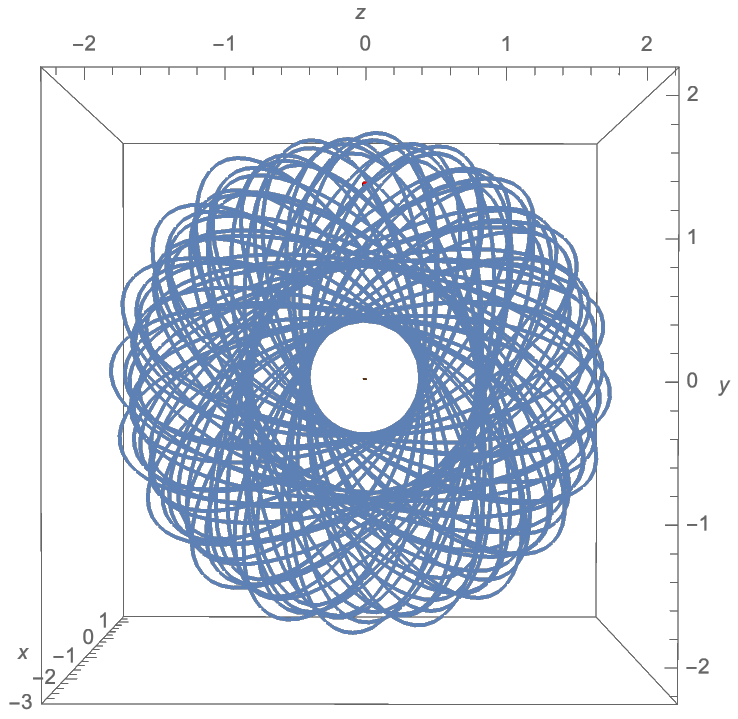}
\includegraphics[scale=0.37]{./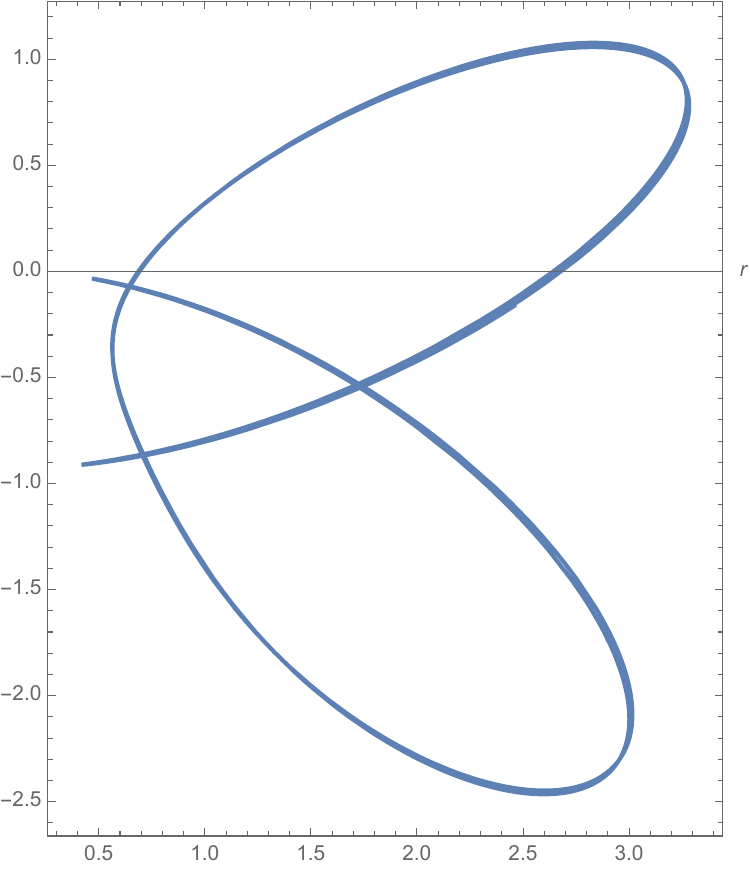}
\includegraphics[scale=0.35]{./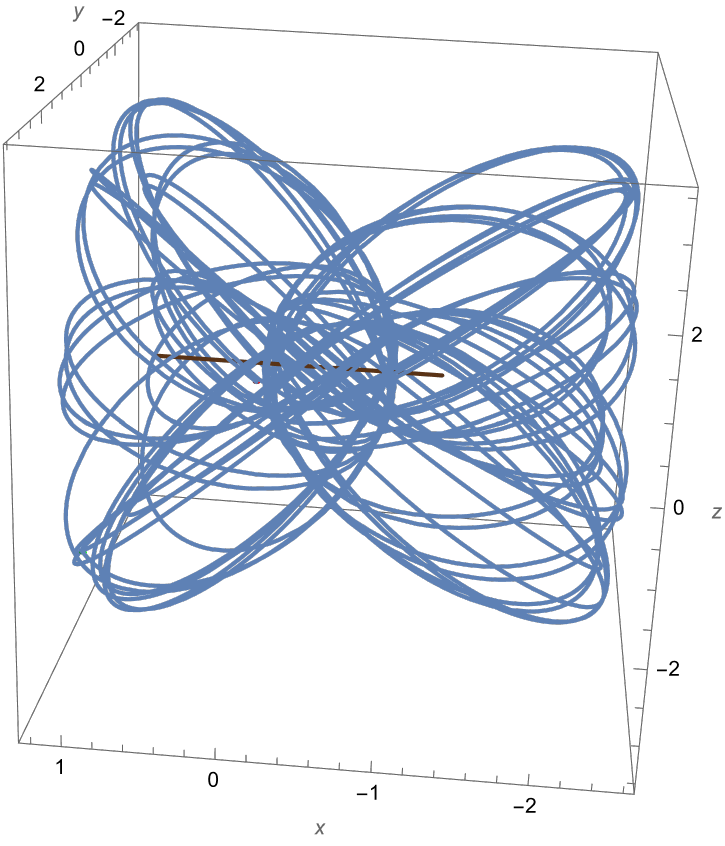}
\includegraphics[scale=0.39]{./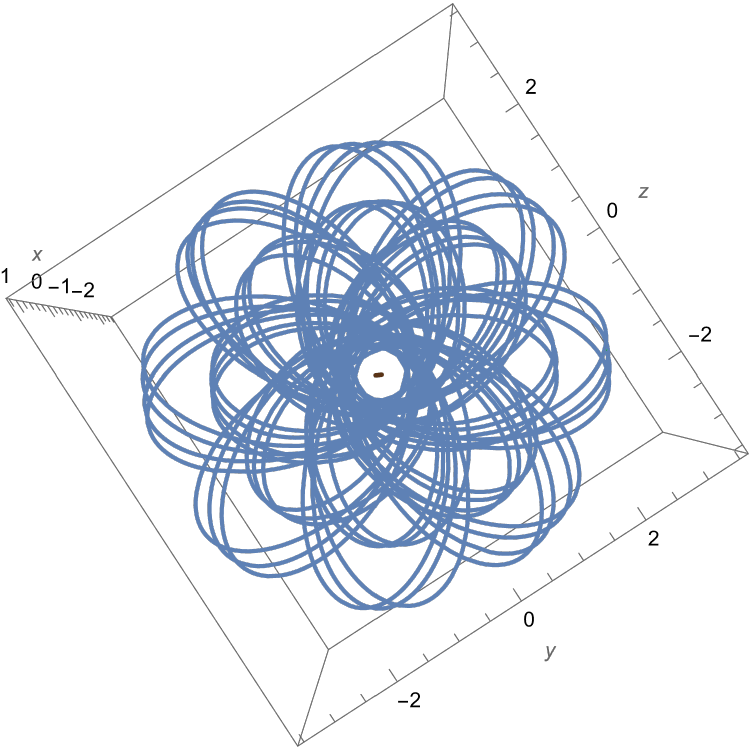}
\caption{\footnotesize The first row corresponds to the equilibria $E_9=(1.68132, 0, -0.46653, 0.900399)$. The first figure displays $2D$ projection in the $rx$-plane, and the following figures are two different views of the $3D$ orbit in configuration space, illustrating the orbit over several revolutions of the angle $\theta$. The second row corresponds with the equilibria $E_{10}=(0.690006, 0, 0.639448, 0.915063)$ and shows figure $2D$ y $3D$.
}
\label{fig:Simulations2}
\end{figure}

A detailed analysis of the elliptic and hyperbolic points showed in the Poincar\'e sections allows us to obtain periodic orbits in the reduced space $(r,x,P_r,P_x)$. Some examples of initial conditions leading to quasi-periodic orbits are given in Figure~\ref{fig:SeccionesPoincareQuasiPO}, where we choose $c=0.7$ and $c=1$ for the case $A=1/8$. Moreover, we provide some simulations of the mentioned quasi-periodic orbits in Figure~\ref{fig:Simulations} and Figure~\ref{fig:Simulations2}. Notice that in this simulations the particle remains in a close proximity of the segment, whose longitude is employed as length unit.

The relative equilibria tagged as $E_i$ for $i=1,\ldots,5$ in Figure~\ref{fig:SeccionesPoincareQuasiPO} and the relative equilibria tagged as $E_i$ for $i=6,\ldots,10$ in Figure~\ref{fig:SeccionesPoincareQuasiPObis} correspond to elliptic and hyperbolic points respectively in the Poincar\'e sections for $A=1/8$ and $A=1/4$ respectively. The accuracy on their initial conditions is limited by the detail provided in the Poincar\'e section.

\section*{Acknowledgement}
The author E.M. acknowledges support form the Universidad del B\'{i}o-B\'{i}o through a doctoral scholarship. The author J.V. was partially supported by ANID-Chile through FONDECYT Iniciaci\'on 11240582.

\bibliographystyle{siam}
\bibliography{Bibliography}

\end{document}